\title{Quantitative results for Bruck iterations of demicontinuous pseudocontractions}
\author{ Daniel K\"ornlein\thanks{The 
author has been 
supported by the German Science Foundation (DFG 
Project KO 1737/5-2).}\\[0.2cm]
Department of Mathematics \\  Technische Universit\"at Darmstadt\\ 
Schlossgartenstra\ss{}e 7, 64289 Darmstadt, Germany}
\newtheorem{theorem}{Theorem}[section]
\theoremstyle{definition}
\newtheorem{lemma}[theorem]{Lemma}
\newtheorem{definition}[theorem]{Definition}
\theoremstyle{remark}
\newtheorem{remark}[theorem]{Remark}
\newcommand{\NN}{\mathbb{N}}
\newcommand{\RR}{\mathbb{R}}
\newcommand{\diam}{\operatorname{diam}}
\begin{document}

\maketitle
\begin{abstract}
Our first result is a rate of metastability in the sense of Tao for Bruck's iteration scheme for demicontinuous pseudocontractions in Hilbert space, extracted from Bruck's original proof. This result generalizes earlier work in the ongoing program of proof mining from Lipschitzian to demicontinuous pseudocontractions. Our second main result is a metastable version of asymptotic regularity under the additional assumption that the underlying operator is norm-to-norm uniformly continuous on bounded subsets. These results (and their intermediate versions given in this paper) provide a thorough quantitative analysis of Bruck's iteration scheme for pseudocontractions in Hilbert space.
\end{abstract}

\let\thefootnote\relax\footnotetext{2010 \textit{Mathematics Subject Classification.} Primary 47H09, 47J25, 90C25, 03F10}

\section{Introduction}
Let $X$ be a normed linear space and $S\subseteq X$ be a subset of $X.$ 
In 1967, Browder introduced an important generalization of the 
class of nonexpansive mappings, namely the {\em pseudocontractive} 
mappings $T:S\to S$ defined by 
\[ \forall u,v\in S\,\forall \lambda >1 \ ((\lambda-1)\| u-v\| 
\le \| (\lambda I-T)(u)-(\lambda I-T)(v)\|),\] where $I$ denotes the 
identity mapping.  

Apart from being a generalization of nonexpansive mappings, the 
pseudocontractive mappings are also closely related to accretive operators, 
where an operator $A$ is called accretive if for every 
$u,v\in S$ and for all $s>0$,
\begin{equation*}
 \left\Vert u-v\right\Vert\leq\left\Vert u-v+s\left(Au-Av\right)\right\Vert.
\end{equation*}
Observe that $T$ is pseudocontractive if and only if $I-T$ is accretive. 
Therefore, any fixed point of $T$ is a root of the accretive operator $I-T$. 

In a Hilbert space, $T$ is pseudocontractive if and only if
\[ \forall u,v\in S \,(\langle Tu-Tv,u-v\rangle \le \| u-v\|^2) \] 
(see e.g. \cite{Chidume(09)}).
 
In \cite{Bruck(74)}, Bruck introduced the following iteration schema 
for pseudocontractive mappings:
 \begin{definition}[\cite{Bruck(74)}]
     Let $C$ be a nonempty convex subset of a real normed space and 
let $T:C\to C$ be a pseudocontraction. Let $(\lambda_n),(\theta_n)$ be 
sequences in $[0,1]$ with $\lambda_n(1+\theta_n)\le 1$ for all $n\in\NN.$  
The 
\textbf{Bruck iteration scheme} with starting point $x_1\in C$ is defined as
     \begin{equation*}
      x_{n+1}=\left(1-\lambda_n\right)x_n+\lambda_nTx_n-\lambda_n\theta_n
\left(x_n-x_1\right).
     \end{equation*}
    \end{definition}
Among many other things, Bruck showed that in Hilbert spaces and for 
bounded closed and convex subsets $C$ this iteration 
strongly converges for so-called acceptably paired sequences 
$(\lambda_n),(\theta_n)$ (cf. Definition \ref{def:acceptably}). 
Moreover the limit is a fixed point of $T$ provided that $T$ is demicontinuous (continuous from the strong to the weak topology on $H$) in addition to being pseudocontractive:
    \begin{definition}[{\cite{Bruck(74)}}]\label{def:acceptably}
      Two sequences $\left(\lambda_n\right)$ and $\left(\theta_n\right)$ in $[0,1]$ are \textbf{acceptably paired} if $\left(\theta_n\right)$ is nonincreasing, $\lim_{n\to\infty}\theta_n=0$ and there exists a strictly increasing sequence $\left(f(n)\right)_n$ of positive integers such that
      \begin{enumerate}
       \item $\liminf\limits_{n\to\infty}\theta_{f\left(n\right)}\cdot\sum\limits_{j=f\left(n\right)}^{f\left(n+1\right)}\lambda_j>0$,
       \item $\lim\limits_{n\to\infty}\left(\theta_{f\left(n\right)}-\theta_{f\left(n+1\right)}\right)\cdot\sum\limits_{j=f\left(n\right)}^{f\left(n+1\right)}\lambda_j=0$, and
       \item $\lim\limits_{n\to\infty}\sum\limits_{j=f\left(n\right)}^{f\left(n+1\right)}\lambda_j^2=0$.
      \end{enumerate}
    \end{definition}
\begin{theorem}[Corollary 4 of {\cite{Bruck(74)}}]\label{PseudoMonotone}
      Let $C$ be a nonempty bounded closed convex subset of a Hilbert space $H$ and $T:C\to C$ be a demicontinuous pseudocontraction. If $\left(\lambda_n\right)$ and $\left(\theta_n\right)$ are acceptably paired such that $\lambda_n\left(1+\theta_n\right)\le1$, then, for all $x_1,z\in C$, the sequence $\left(x_n\right)$ defined by
      \begin{equation*}
       x_{n+1}=\left(1-\lambda_n\right)x_n+\lambda_nTx_n+\lambda_n\theta_n\left(z-x_n\right)
      \end{equation*}
      remains in $C$ and converges strongly to the fixed point of $T$ which is closest to $z$.
    \end{theorem}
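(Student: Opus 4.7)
The plan is to follow Bruck's original strategy in three stages: verify invariance of $C$, derive a fundamental recursive inequality for $\|x_n - p\|^2$ at fixed points $p$ of $T$, and then combine the acceptably paired condition with a weak-cluster-point argument to identify strong convergence to the correct fixed point. First, rewrite the update as
\[
x_{n+1} = (1 - \lambda_n - \lambda_n\theta_n)x_n + \lambda_n T x_n + \lambda_n \theta_n z;
\]
since $\lambda_n(1+\theta_n) \le 1$, the three coefficients lie in $[0,1]$ and sum to $1$, so $(x_n)\subset C$ by induction. Existence of a fixed point of $T$ (call its set $F(T)$) on a bounded closed convex subset of a Hilbert space under demicontinuity and pseudocontractivity is a known consequence of Deimling-type theorems; let $p^* \in F(T)$ be the element nearest to $z$.

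Next I would derive the key recursion. Writing $x_{n+1} - p = (x_n - p) + \lambda_n(Tx_n - x_n) - \lambda_n\theta_n(x_n - z)$ for $p \in F(T)$, expanding $\|x_{n+1} - p\|^2$ via the Hilbert inner product, and using the pseudocontractive estimate $\langle Tx_n - p, x_n - p\rangle \le \|x_n - p\|^2$, one arrives at
\[
\|x_{n+1} - p\|^2 \le (1 - 2\lambda_n\theta_n)\|x_n - p\|^2 + 2\lambda_n\theta_n \langle z - p, x_n - p\rangle + \lambda_n^2 M^2,
\]
where $M$ bounds $\|Tx_n - x_n - \theta_n(x_n - z)\|$ uniformly in $n$, finite because $C$ is bounded.

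The heart of the proof is the block-iteration driven by $f$. Iterating the previous inequality across one block $j = f(n),\dots,f(n+1)-1$, condition (iii) controls the accumulated $\lambda_j^2 M^2$-errors, condition (i) provides a per-block contraction factor bounded away from $1$ of the approximate form $\exp(-2\theta_{f(n)}\sum_j \lambda_j)$, and condition (ii) combined with the monotonicity of $(\theta_n)$ makes the drift of $\theta_j$ across a block negligible relative to this factor. A block-wise variant of Xu's lemma then reduces strong convergence $\|x_n - p^*\| \to 0$ to the cross-term condition $\limsup_n \langle z - p^*, x_{f(n)} - p^*\rangle \le 0$. To verify this, one first extracts asymptotic regularity $\|x_{f(n)} - T x_{f(n)}\| \to 0$ from the same block analysis, selects a weakly convergent subsequence $x_{f(n_k)} \rightharpoonup q \in C$ by weak compactness of bounded sets in $H$, and uses demicontinuity of $T$ together with demiclosedness of $I - T$ in Hilbert space to conclude $q \in F(T)$. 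The projection characterization $\langle z - p^*, q - p^*\rangle \le 0$ valid for every $q \in F(T)$ then closes out the subsequential argument, and propagating the convergence from $(x_{f(n)})$ to the full sequence $(x_n)$ is routine via the same key inequality.

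The main obstacle will be the third step: unlike the Halpern-style setting, one does not have global summability of $\lambda_n^2$, only the block-level condition (iii). The one-shot Xu recursion must accordingly be replaced by a careful per-block version, telescoping the factors $1 - 2\lambda_j\theta_j$ and balancing them against the combined strength of (i) and (ii). A second delicate point is the appeal to demiclosedness: the only regularity of $T$ available is demicontinuity, not Lipschitz continuity, so one must check that the asymptotic regularity extracted in the block analysis is really strong enough, together with demicontinuity, to force the weak cluster point $q$ into $F(T)$.
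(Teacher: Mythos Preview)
Your outline diverges from Bruck's argument at the decisive third stage, and the divergence creates a real gap rather than a stylistic difference. Bruck does \emph{not} compare $x_n$ to a fixed point $p\in F(T)$ and then chase the cross-term $\limsup_n\langle z-p^*,x_{f(n)}-p^*\rangle$ via asymptotic regularity and demiclosedness. Instead he introduces the resolvent path $y_\theta\in C$ determined by $\theta(y_\theta-z)+(I-T)y_\theta=0$, shows by monotone-operator methods (no weak-cluster-point argument) that $y_\theta\to p^*$ strongly as $\theta\to0^+$, and runs the block recursion with $y_{\theta_{f(k)}}$ in the r\^ole of your $p$. Because $U+\theta I$ is $\theta$-strongly monotone, the recursion for $\|x_{f(k+1)}-y_{\theta_{f(k)}}\|^2$ carries a contraction factor $\gamma<1$ uniform in $k$; combined with $y_{\theta_{f(k)}}\to p^*$ and the translation-invariance of $\limsup$ one gets
\[
\gamma\cdot\limsup_k\|x_{f(k)}-p^*\|^2\ \ge\ \limsup_k\|x_{f(k+1)}-p^*\|^2\ =\ \limsup_k\|x_{f(k)}-p^*\|^2,
\]
forcing the limit superior to vanish.

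Your route, by contrast, requires two ingredients that are simply not available under the stated hypotheses. First, the block analysis of $\|x_n-p\|^2$ gives no control over $\|x_{f(n)}-Tx_{f(n)}\|$: the pseudocontractivity was already spent to produce the contraction factor, and nothing in the recursion bounds the residual $x_n-Tx_n$. (Asymptotic regularity of $(x_n)$ is in fact only obtained in this paper under an \emph{additional} uniform-continuity assumption on $T$; see Theorem~\ref{thm:unifcont}.) Second, even granting $\|x_{f(n)}-Tx_{f(n)}\|\to0$, the ``demiclosedness of $I-T$'' you invoke is not a theorem for merely demicontinuous pseudocontractions: demicontinuity says that $u_n\to u$ \emph{strongly} implies $Tu_n\rightharpoonup Tu$, so from $x_{f(n_k)}\rightharpoonup q$ only weakly you learn nothing about $Tx_{f(n_k)}$. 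These are exactly the two ``delicate points'' you flag at the end, but they are not merely delicate---they are genuine obstructions, and the resolvent-path device is precisely what Bruck uses to bypass both of them.
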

    
    Effective uniform rates on the strong convergence of $(x_n)$ are generally ruled out. In fact, Neumann \cite{Neumann(15)} showed that there are (computable) nonexpansive mappings $f$ on the Hilbert cube (sequences $(x_n)\in\ell_2$ with $\vert x_n\vert\le1$ for all $n$) that have no computable fixed points, and so no sequence approximating any fixed point of $f$ can have a computable rate of convergence. Following general proof-theoretic methods, it is necessary to pass first to a finitary version of Cauchyness, the so-called metastability in the sense of Tao, i.e. (here $[n;n+g(n)]:=\{ n,n+1,n+2,\ldots,n+g(n)\}$) 
\begin{equation*}
  \forall \varepsilon >0\,\forall g:\NN\to\NN\,\exists n\in\NN\,\forall i,
  j\in [n;n+g(n)]\,\big(\Vert x_i-x_j\Vert<\varepsilon\big).
\end{equation*}
Metastability is the so-called Herbrand normal form of (a suitable reformulation of) the Cauchy statement for the sequence $(x_n)$, and, as such, is equivalent to the original statement. It is finitary in the sense that it only talks about finite subsequences of $(x_n)$. A rate of metastability is then a bound $\Phi:\NN\times\NN^\NN\to\NN$ on the existential quantifier:
\begin{equation*}
  \forall \varepsilon >0\,\forall g:\NN\to\NN\,\exists n\le\Phi(\varepsilon,g)\,\forall i,j\in [n;n+g(n)]\,\big(\Vert x_i-x_j\Vert<\varepsilon\big).
\end{equation*}
Such bounds are guaranteed to exist under vastly general conditions on the complexity of the proof (\cite{Kohlenbach(08)}). 

A quantitative, finitary version of all of Theorem \ref{PseudoMonotone}, however, should not only finitise the Cauchyness of $(x_n)$, but also that the strong limit is indeed a fixed point. If $T$ were norm-to-norm continuous, one way to do so would be to ensure that the sequence $(x_n)$ is not only Cauchy along the interval $[n;n+g(n)]$, but also asymptotically regular:
\begin{equation*}
 \forall\varepsilon>0\,\forall g:\NN\to\NN\,\exists n\le\Phi(\varepsilon,g)\,\forall i,j\in[n;n+g(n)]\,\big(\Vert x_i-x_j\Vert<\varepsilon\,\wedge\,\Vert Tx_i-x_i\Vert<\varepsilon\big).
\end{equation*}
By the logical equivalence of a statement to its Herbrand normal form, this implies both Cauchyness and asymptotic regularity. Cauchyness then implies that the strong limit exists, while norm-to-norm continuity and asymptotic regularity recover the fact that the limit is indeed a fixed point.

In the case at hand, however, the operator $T$ is only demicontinuous. In fact, convergence to a fixed point is established via the \textit{continuous} path $(z_t)$ defined by $z_t=tTz_t+(1-t)z$, which -- in turn -- converges strongly to the fixed point of $T$ closest to $z$. This gives rise to the following finitary version of Theorem \ref{PseudoMonotone}:
\begin{equation}\label{eq:finite}
 \forall\varepsilon>0\,\forall g:\NN\to\NN\,\exists n\le\Phi(\varepsilon,g)\,\forall i,j\in[n;n+g(n)]\,\big(\Vert x_i-x_j\Vert<\varepsilon\,\wedge\Vert x_i-y_i\Vert<\varepsilon\,\wedge\,\Vert y_i-Ty_i\Vert<\varepsilon\big),\tag{+}
\end{equation}
where $y_i=z_{1/1+\theta_i}$. Our main theorem (Theorem \ref{thm:deliver}) provides such a bound. If $T$ is even norm-to-norm uniformly continuous with modulus $\omega$, then one can obtain a bound $\Delta$ such that (see Theorem \ref{thm:unifcont})
     \begin{equation*}
       \forall\varepsilon>0\forall g:\NN\to\NN\exists n\le\Delta(g,\varepsilon)\forall i,j\in[n;n+g(n)]\bigl(\Vert x_i-x_j\Vert\le\varepsilon\wedge\Vert x_i-Tx_i\Vert\le\varepsilon\bigr).
      \end{equation*}
This is a generalization of Theorem 2.8 of \cite{KoernleinKohlenbach(13)}, which required $T$ to be Lipschitz continuous. As guaranteed by general logical metatheorems \cite{Kohlenbach(08)}, these bounds are highly uniform in the input data; it is independent of the space and the concrete choices for the operator $T:C\to C$, the set $C$ or the parameter sequences $(\lambda_n)$ and $(\theta_n)$. Apart from the counterfunction $g$ and the accuracy $\varepsilon$, the bounds only depend on an upper bound on the diameter $\diam(C)$, moduli for the quantiative version of acceptably pariedness (cf. Definition \ref{def:acceptablyQuant}) and, in the case of Theorem \ref{thm:unifcont}, the modulus of uniform continuity $\omega$.

Moreover, the new, logically transformed proof of \eqref{eq:finite} is totally elementary in that all ideal principles have been eliminated; it can be formalized in constructive (``intuitionistic'') arithmetic enriched by an abstract normed space $X$ (see Kohlenbach \cite{Kohlenbach(08)}) and axioms asserting that $X$ is a Hilbert space. Moreover, one can recover Bruck's original theorem using only the axiom of choice over \textit{quantifier-free} sentences.
\section{Analysis of Bruck's Proof}

We now examine from a proof-theoretic perspective the steps into which Bruck's proof of Theorem \ref{PseudoMonotone} decomposes. First of all, we need to recall the generalization of pseudocontractiveness to set-valued operators. $T\subseteq H\times H$ is pseudocontractive, if, for all $(u,x),(v,y)\in T$,
\begin{equation*}
 \langle x-y,u-v\rangle\le\Vert u-v\Vert^2.
\end{equation*}
Moreover, an operator $U\subset H\times H$ is monotone if and only if $I-U$ is pseudocontractive. It is \textit{maximal} monotone if there does not exist a monotone $U'\subset H\times H$ such that $U\subsetneq U'$.

Bruck's proof then follows the following line of argument:
\begin{enumerate}[label=(\roman*)]
 \item\label{item:1} The monotone operator $U:I-T$ is extended to a maximal monotone, set valued operator $U^*\subset H\times H$.
 \item\label{item:2} There exists a unique $y_{\theta}$ for each $\theta>0$ for which $0\in\theta(y_{\theta}-z)+U^*(y_\theta)$.
 \item\label{item:3} The strong $\lim_{\theta\to0^+}y_{\theta}$ exists and is the point $x^*$ of ${U^*}^{-1}(0)$ closest to $z$.
 \item\label{item:4} The sequence $(x_n)$ also converges to $x^*$.
 \item\label{item:5} The limit is a zero of $U$, and hence a fixed point of $T$.
\end{enumerate}
The existence of a maximal monotone extension of a monotone operator $U:H\to H$ makes use of Zorn's Lemma, which is equivalent to the Axiom of Choice. However, we are, for this paper, only interested in the single-valued case. As shown in \cite{KoernleinKohlenbach(13)}, it is possible to avoid the detour via maximal monotone extensions. A similar result has also been shown by Lan and Wu in \cite{LanWu(02)}. The existence of the path $(y_t)_{t\in(0,1]}$ is also guaranteed in this case since the mapping $U_t:C\to C,y\mapsto t(y-z)+U(x)$ is $t$-strongly monotone for each $t>0$, and thus has a unique fixed point (see \cite{KoernleinKohlenbach(13)}). The mere existence of the sequence $(y_{\theta_n})_n$ makes no proof-theoretic contribution since their defining property is a purely universal statement, i.e. one with only $\forall$-quantifiers.

The  convergence of $(y_{\theta_n})$ to the fixed point of $T$ closest to $z$ is then carried out analogously to the multi-valued case in Bruck's proof \cite{Bruck(74)}. A quantitative analysis of this step has been performed and a rate of metastability has already been extracted in \cite{KoernleinKohlenbach(13)}.

The convergence $\Vert x_n-x^*\Vert\to0$ is established via convergence of the subsequence $\Vert x_{f(n)}-x^*\Vert\to0$, which is shown using the existence of the limit superior as a translation invariant functional $\limsup:\ell_\infty\to\RR$ as follows: If $f:\NN\to\NN$ denotes the subsequence from Definition \ref{def:acceptably}, then there exists a constant $\gamma\in(0,1)$ such that
\begin{align}
 \gamma\cdot\limsup\Vert x_{f(k)}-x^*\Vert^2&=\gamma\cdot\limsup\Vert x_{f(k)}-y_{\theta_{f(k)}}\Vert^2\notag\\
					    &\ge\limsup\Vert x_{f(k+1)}-y_{\theta_{f(k)}}\Vert^2\label{eq:!}\\
					    &=\limsup\Vert x_{f(k+1)}-x^*\Vert^2\notag\\
					    &=\limsup\Vert x_{f(k)}-x^*\Vert^2.\notag
\end{align}
where inequality \eqref{eq:!} is shown in Bruck's proof. Therefore, $\limsup\Vert x_{f(k)}-x^*\Vert=0$, so the subsequence $(x_{f(k)})$ converges to $x^*$. Basic arithmetic then implies the convergence of the original sequence.

\section{Main Results}

To obtain a quantitative version of Theorem \ref{PseudoMonotone}, we need a quantitative version of what it means for two sequences to be acceptably paired. 
    \begin{definition}\label{def:acceptablyQuant} Two sequences $(\lambda_n)$ and $(\theta_n)$ in $[0,1]$ are called acceptably paired with moduli $\varphi_1,\varphi_2,\varphi_3:\RR\to\NN$, $f:\NN\to\NN$, $n_0\in\NN$ and $\delta>0$ if  $(\theta_n)$ is nonincreasing and the following conditions are satisfied:
     \begin{enumerate}
      \item\label{i} $\forall\varepsilon>0\forall n\ge\varphi_1(\varepsilon)\left(\theta_n\le\varepsilon\right)$,
      \item\label{ii} $\forall n(f(n+1)\ge f(n)+1)$,
      \item\label{iii} $\forall n\ge n_0 \left(\theta_{f(n)}\cdot\sum\limits_{j=f(n)}^{f\left(n+1\right)}\lambda_j\ge\delta\right)$,
      \item\label{iv} $\forall\varepsilon>0\forall n\ge\varphi_2(\varepsilon)\left((\theta_{f(n)}-\theta_{f(n+1)})\cdot\sum\limits_{j=f(n)}^{f\left(n+1\right)}\lambda_j\le\varepsilon\right)$, and
      \item\label{v} $\forall\varepsilon>0\forall n\ge\varphi_3(\varepsilon)\left(\sum\limits_{j=f(n)}^{f(n+1)}\lambda_j^2\le\varepsilon\right)$.
     \end{enumerate}
    \end{definition}
  The moduli $\varphi_i$ are rates of convergence of their respective sequences to $0$. The numbers $n_0$ and $\delta$ are quantitative witnesses for the condition that the sequence $\theta_{f(n)}\cdot\sum_{i=f(n)}^{f(n+1)}\lambda_j$ stays strictly away from $0$, i.e.~its $\liminf$ is greater than $0$. It is also noteworthy that the function $k\mapsto k^*:=\max\{n\in\NN:f(n)\le k\}$ is well-defined for all $k\ge f(0)$. Moreover, $(f(k))^*=k$ for all nonnegative integers $k$.
    \begin{remark}[{\cite{Bruck(74)}}]\label{rem:ex}
    Examples of acceptably paired sequences are:
     \begin{enumerate}
      \item $\lambda_n=1/n$, $\theta_n=1/\log\log n$ and $f(n)=n^n$.
      \item For $0<p<1$ and $0<q<\min\left\{p,1-p\right\}$, $\lambda_n=n^{-p}$ and $\theta_n=n^{-q}$ are acceptably paired with $f(n)=\lceil n^{d/(1-p)}\rceil$ for suitable $d>1$ (see Section \ref{sec:ex1} for details).
     \end{enumerate}
     The corresponding moduli will be given in Section \ref{sec:ex}.
    \end{remark}

%     \begin{theorem}[\cite{KoernleinKohlenbach(13)}]\label{QuantResolventHilbert}
%      Let $H$ be a Hilbert space, $C\subseteq H$ be a nonempty bounded closed convex subset and $T:C\to C$ a demicontinuous pseudocontraction. Then, for each $x\in C$ and $t\in\left[0,1\right)$, there exists a unique $z_t\in C$ such that $z_t=tTz_t+\left(1-t\right)x$. Moreover, the strong
%      \begin{equation*}
%       \lim_{t\to1^{-}}z_t=z,
%      \end{equation*}
%     exists and is the fixed point of $T$ closest to $x$.
%     \end{theorem}
%     
%     
%     
%     
%     \begin{corollary}[\cite{KoernleinKohlenbach(13)}]\label{UnboundedQuantResHil}
%      In the situation of Theorem \ref{QuantResolventHilbert}, we may drop the assumption that $C$ is bounded if $T$ has a fixed point. Moreover, if $v\in \fix(T)$, then $\left\Vert z_t-v\right\Vert\le\left\Vert x-v\right\Vert$ for all $t\in\left[0,1\right)$.
%     \end{corollary}
%     
\begin{lemma}\label{le:metaSubs}
     Suppose that $X$ is a normed space and $(a_n)\subseteq X$ is metastable with rate $\Psi:(0,\infty)\times\NN^\NN\to\NN$. Then, for any nondecreasing $f:\NN\to\NN$ with $f(n)\ge n$, the sequence $(a_{f(n)})$ is metastable with rate $\tilde\Psi_f$ defined by $\tilde\Psi_f(\varepsilon,g):=\Psi(\varepsilon,g_f)$, where $g_f:\NN\to\NN$ is defined by $g_f(n):=f(n+g(n))-n$.
    \end{lemma}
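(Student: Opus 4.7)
The plan is to unfold the definition of metastability for $(a_{f(n)})$ and reduce it to the metastability of $(a_n)$ applied to the shifted counterfunction $g_f$. Fix $\varepsilon>0$ and a counterfunction $g:\NN\to\NN$. I would first verify that $g_f$ is a legitimate counterfunction and that the candidate $n$ produced by $\Psi(\varepsilon,g_f)$ is no larger than $\tilde\Psi_f(\varepsilon,g)$, which is immediate by definition. Then, applying the assumed metastability of $(a_n)$ to $g_f$, I obtain some $n\le\tilde\Psi_f(\varepsilon,g)$ such that
\begin{equation*}
\forall i,j\in[n;n+g_f(n)]\;\bigl(\Vert a_i-a_j\Vert<\varepsilon\bigr),
\end{equation*}
i.e., the estimate holds on the interval $[n;f(n+g(n))]$.

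The remaining task is to show that this same $n$ witnesses metastability for the subsequence $(a_{f(k)})$ on the interval $[n;n+g(n)]$. For any $i\in[n;n+g(n)]$, the two assumptions on $f$ give $n\le i\le f(i)\le f(n+g(n))=n+g_f(n)$: the lower bound uses $f(i)\ge i\ge n$ and the upper bound uses monotonicity of $f$. Hence $f(i)\in[n;n+g_f(n)]$, and likewise for $f(j)$, so $\Vert a_{f(i)}-a_{f(j)}\Vert<\varepsilon$ as desired.

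There is no real obstacle here; the lemma is essentially a bookkeeping statement about how the Herbrand counterfunction must be enlarged to cover the image of the shifted index range under $f$. The only thing worth flagging is that the hypothesis $f(n)\ge n$ is precisely what is needed to ensure $f(i)\ge n$ for $i\ge n$ (so that $f(i)$ lands in the left endpoint of the target interval), while nondecreasingness of $f$ yields the upper bound; both hypotheses are used in a single line of the argument.
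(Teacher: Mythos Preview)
Your proof is correct and follows essentially the same approach as the paper: apply metastability of $(a_n)$ to the enlarged counterfunction $g_f$, then use $f(i)\ge i$ and monotonicity of $f$ to place $f(i)$ in the resulting interval $[n;f(n+g(n))]$. The paper's argument passes through the intermediate interval $[f(n);f(n+g(n))]$ as a separate step, but this is the same reasoning you give in a single line.
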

    
    \begin{proof}
     Since $(a_n)$ is metastable with modulus $\Psi$,
     \begin{equation*}
      \forall\varepsilon>0\forall g:\NN\to\NN\exists n\le\Psi(\varepsilon,g_f)\forall i,j\in[n;f(n+g(n))]\big(\Vert a_i-a_j\Vert\le\varepsilon\big).
     \end{equation*}
     Since $f(n)\ge n$, we conclude
     \begin{equation*}
      \forall\varepsilon>0\forall g:\NN\to\NN\exists n\le\Psi(\varepsilon,g_f)\forall i,j\in[f(n);f(n+g(n))]\big(\Vert a_i-a_j\Vert\le\varepsilon\big).
     \end{equation*}
     The monotonicity of $f$ then implies
     \begin{equation*}
      \forall\varepsilon>0\forall g:\NN\to\NN\exists n\le\Psi(\varepsilon,g_f)\forall i,j\in[n;n+g(n)]\big(\Vert a_{f(i)}-a_{f(j)}\Vert\le\varepsilon\big),
     \end{equation*}
     so $\tilde\Psi_f$ is a rate of metastability for $(a_{f(n)})$.
    \end{proof}

    \begin{lemma}\label{le:metaStar}
     Suppose that $f:\NN\to\NN$ is strictly increasing and for each $k$, we have a statement $A(k)$. Define a function $(\cdot)^*:\{n\in\NN:n\ge f(0)\}\to\NN$ by $k\mapsto\max\{n\in\NN:f(n)\le k\}$. Then for all $g:\NN\to\NN$
     \begin{equation*}
      A(k)\textmd{ for all }k\in[n;n+\tilde g(n)]\Rightarrow A(k^*)\textmd{ for all }k\in[m;m+g(m)],
     \end{equation*}
     where $\tilde g(n):=\left(f(n)+g(f(n))\right)^*-n$ and $m:=f(n)$.

    \end{lemma}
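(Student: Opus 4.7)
The plan is to unpack what has to be shown. Fix $g:\NN\to\NN$ and $n$, set $m:=f(n)$, and assume $A(k)$ holds for every $k\in[n;n+\tilde g(n)]$. The conclusion concerns $A(k^*)$ for $k$ ranging over $[m;m+g(m)]=[f(n);f(n)+g(f(n))]$, so it suffices to verify the single index inclusion
\[
 k\in[f(n);f(n)+g(f(n))] \ \Longrightarrow\ k^*\in[n;n+\tilde g(n)],
\]
since then the hypothesis applied to $k^*$ immediately gives $A(k^*)$.

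First I would record two elementary properties of the map $(\cdot)^*$. Since $f$ is strictly increasing on $\NN$, for every $k\ge f(0)$ the set $\{n\in\NN:f(n)\le k\}$ is nonempty and bounded above, so $k^*$ is well defined; in particular $(f(n))^*=n$, as already noted in the paper. Moreover $(\cdot)^*$ is nondecreasing on its domain: if $f(0)\le k\le k'$, then $\{n:f(n)\le k\}\subseteq\{n:f(n)\le k'\}$, so $k^*\le (k')^*$.

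With these two facts, the desired inclusion is routine. For the lower bound, $k\ge f(n)$ means that $n$ itself lies in $\{n':f(n')\le k\}$, hence $k^*\ge n$. For the upper bound, monotonicity of $(\cdot)^*$ together with $k\le f(n)+g(f(n))$ gives $k^*\le (f(n)+g(f(n)))^*=n+\tilde g(n)$, by the very definition of $\tilde g$. Applicability of $(\cdot)^*$ at the right endpoint is automatic because $f(n)+g(f(n))\ge f(n)\ge f(0)$. Combining these bounds yields $k^*\in[n;n+\tilde g(n)]$, and the hypothesis completes the proof.

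I do not foresee a genuine obstacle here: the lemma is a pure combinatorial transfer from intervals in the $f$-image back to the index set, and the whole argument is driven by the monotonicity of $(\cdot)^*$ inherited from the strict monotonicity of $f$. The only thing to watch is that all invocations of $(\cdot)^*$ occur at arguments $\ge f(0)$, which is immediate in each instance above.
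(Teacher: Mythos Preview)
Your argument is correct and is essentially the same as the paper's: both rest on the identities $n=(f(n))^*=m^*$ and $n+\tilde g(n)=(m+g(m))^*$ together with the monotonicity of $(\cdot)^*$. The only cosmetic difference is that the paper enumerates the conjunction $A(m^*)\wedge A((m+1)^*)\wedge\ldots\wedge A((m+g(m))^*)$ explicitly, whereas you phrase the same thing as the index inclusion $k^*\in[n;n+\tilde g(n)]$ for $k\in[m;m+g(m)]$.
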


    \begin{proof}
     Assume the statement $A(k)$ holds for all $k\in[n;n+\tilde g(n)]$. Observe that $n+\tilde g(n)=\bigl(m+g(m)\bigr)^*$ and $n=(f(n))^*=m^*$, so the statement $A(k)$ holds for all $k\in[m^*;(m+g(m))^*]$. Therefore,
     \begin{equation*}
      A(m^*)\wedge A(m^*+1)\wedge\ldots\wedge A((m+g(m))^*)
     \end{equation*}
    In particular, $(\cdot)^*$ is nondecreasing (since $f$ is nondecreasing) and so
     \begin{equation*}
      A(m^*)\wedge A((m+1)^*)\wedge\ldots\wedge A((m+g(m))^*).
     \end{equation*}
    Therefore, statement $A(k^*)$ holds for all $k\in[m;m+g(m)]$.
    \end{proof}

    We now give our main results, which were obtained by logical analysis of Bruck's proof \cite{Bruck(74)} using the proof-theoretic methods treated extensively in \cite{Kohlenbach(08)}.
    
        \begin{theorem}\label{PseudoMonotoneQuant}
      Let $C$ be a nonempty bounded closed convex subset of a Hilbert space $H$ with $\diam(C)\le M\in\NN$, $T:C\to C$ be a demicontinuous, single-valued pseudocontraction and $x_1,z\in C$. Suppose the sequences $(\lambda_n)$ and $(\theta_n)$ are acceptably paired with moduli as in Definition \ref{def:acceptablyQuant} satisfying $\lambda_n\left(1+\theta_n\right)\le1$, and the sequence $(y_i)$ defined by 
      \begin{equation*}
       y_i=\frac{1}{1+\theta_i}Ty_i+\frac{\theta_i}{1+\theta_i}z
      \end{equation*}
      is metastable with rate $\Psi:(0,\infty)\times\NN^\NN\to\NN$. Define the sequence $\left(x_n\right)$ by
      \begin{equation*}
       x_{n+1}=\left(1-\lambda_n\right)x_n+\lambda_nTx_n+\lambda_n\theta_n\left(z-x_n\right),
      \end{equation*}
      and a function $\Phi$ by $\Phi(\varepsilon,g,\varphi_1,\varphi_2,\varphi_3,\delta,n_0,M,f):=f(\tilde\Psi(\tilde\varepsilon,g_d)+n_1+d+1)$, where $\varphi_1,\varphi_2,\varphi_3:\RR\to\NN$, $f:\NN\to\NN$, $n_0\in\NN$ and $\delta>0$ are the moduli of Definition \ref{def:acceptablyQuant} and
      \begin{align*}
       &\tilde\Psi_f(\varepsilon,g):=\Psi(\varepsilon,g_f),&&g_f(n):=f(n+g(n))-n\\
       &g_d(n):=d+n_1+1+\tilde g(n+n_1+d+1)&&\tilde g(n):=\left(f(n)+g(f(n))\right)^*-n\\
       &d:=\max\{f(k_0),\lceil\log_c(\varepsilon/8M)\rceil\},&&k^*:=\max\{n\in\NN:f(n)\le k\}\\
       &n_1:=\max\{n_0,\varphi_1(\delta/2),\varphi_2(\tilde\varepsilon/4M^2),\varphi_3(\tilde\varepsilon^2/8M^2)\},&&k_0=\max\{\varphi_2(\varepsilon^2/6M^2),\varphi_3(\varepsilon^2/12M^2)\},\\
       &c:=\exp(-\delta/2)m,&&\tilde\varepsilon=\frac{1-c}{16}\cdot\varepsilon.
      \end{align*}
      To simplify notation, we will omit the dependence of $\Phi$ on the moduli for the parameters $(\lambda_n)$ and $(\theta_n)$ and instead write $\Phi(\varepsilon,g):=\Phi(\varepsilon,g,\varphi_1,\varphi_2,\varphi_3,\delta,n_0,M,f)$.
      Then,
      \begin{equation*}
       \forall\varepsilon>0\forall g:\NN\to\NN\exists n\le\Phi(g,\varepsilon)\forall i,j\in[n;n+g(n)]\left(\Vert x_i-y_{f(i^*)}\Vert\le\varepsilon\wedge\Vert y_{f(i^*)}-y_{f(j^*)}\Vert\le\varepsilon\right)
      \end{equation*}
    \end{theorem}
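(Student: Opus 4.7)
The plan is to provide a quantitative rendering of the limsup argument summarized by \eqref{eq:!}. The heart of the proof is a one-step inequality of the form
\begin{equation*}
 \Vert x_{f(n+1)} - y_{\theta_{f(n)}}\Vert^2 \le c \Vert x_{f(n)} - y_{\theta_{f(n)}}\Vert^2 + \eta_n,
\end{equation*}
valid for $n \ge n_1$, obtained by expanding $\Vert x_{j+1} - y_{\theta_{f(n)}}\Vert^2$ via the Bruck step, applying the Hilbert-space characterization of pseudocontractiveness, and telescoping from $j = f(n)$ to $j = f(n+1)-1$. The contraction factor $c \in (0,1)$ comes from the estimate $\prod_{j=f(n)}^{f(n+1)-1}(1 - \lambda_j \theta_{f(n)}) \le \exp\bigl(-\theta_{f(n)} \sum_j \lambda_j\bigr) \le \exp(-\delta)$ supplied by condition \ref{iii}, while the additive error $\eta_n$ is governed by $\varphi_2$ (through condition \ref{iv}) and $\varphi_3$ (through condition \ref{v}). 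For intermediate indices $i \in [f(k); f(k+1))$ that are not of the form $f(\cdot)$, essentially the same computation without telescoping yields the weaker non-contractive companion estimate $\Vert x_i - y_{\theta_{f(k)}}\Vert^2 \le \Vert x_{f(k)} - y_{\theta_{f(k)}}\Vert^2 + O\bigl(\sum_j \lambda_j^2\bigr)$, which propagates smallness across an entire block once it has been established at the block's left endpoint.

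Next I would iterate the one-step inequality $d$ times, choosing $d \ge \log_c(\varepsilon/(8M))$ so that $c^d \cdot M^2$ is negligible. This yields $\Vert x_{f(n+d)} - y_{\theta_{f(n+d)}}\Vert \le \varepsilon/4$, provided that the resolvent path does not drift by more than $\tilde\varepsilon$ across the iteration window, i.e.~that $\Vert y_{\theta_{f(k)}} - y_{\theta_{f(k')}}\Vert \le \tilde\varepsilon$ for $k, k'$ in the window. This non-drift is obtained from the assumed metastability $\Psi$ of $(y_i)$, pushed through Lemma \ref{le:metaSubs} to produce a metastability rate $\tilde\Psi_f$ for the subsequence $(y_{f(n)})$.

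Concretely, given $\varepsilon$ and $g$, I would feed $\tilde\Psi_f$ the counterfunction $g_d$ of the theorem at accuracy $\tilde\varepsilon = (1-c)\varepsilon/16$, obtaining an index $n$ with $(y_{f(k)})$ being $\tilde\varepsilon$-stable across $[n; n + g_d(n)]$. Setting $n' := n + n_1 + d + 1$, the $d$ iterations of the contraction (absorbed inside $g_d$) yield $\Vert x_{f(n')} - y_{f(n')}\Vert \le \varepsilon/2$, and the companion bound propagates this to $\Vert x_i - y_{f(k)}\Vert \le \varepsilon$ for each $i$ in any block $[f(k); f(k+1))$ whose label $k$ lies in the post-iteration subwindow. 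The statement $A(k) := \bigl(\forall i \in [f(k); f(k+1))\, (\Vert x_i - y_{f(k)}\Vert \le \varepsilon)\,\wedge\,\Vert y_{f(k)} - y_{f(n')}\Vert \le \varepsilon\bigr)$ then holds for all $k \in [n'; n' + \tilde g(n')]$. A final application of Lemma \ref{le:metaStar} with $m := f(n')$ converts this $f$-indexed statement into one of the form $A(k^*)$ for $k \in [m; m+g(m)]$, which — since $i^* = k$ whenever $i \in [f(k); f(k+1))$ — is exactly the conjunction claimed in the theorem, bounded by $\Phi(\varepsilon, g) = f\bigl(\tilde\Psi_f(\tilde\varepsilon, g_d) + n_1 + d + 1\bigr)$.

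The main obstacle I expect is the careful calibration of the various error budgets. The value $\tilde\varepsilon = (1-c)\varepsilon/16$ is dictated by the requirement that, after summing the $d$ additive errors from iterating the contraction (a geometric series bounded by $1/(1-c)$), the drift of the resolvent path across the metastability window, and the within-block error contributed by the companion bound, everything still fits inside $\varepsilon$. One must also verify that $g_d$ anticipates far enough ahead to cover all indices subsequently touched by Lemma \ref{le:metaStar} — which is precisely the content of the definition $g_d(n) := d + n_1 + 1 + \tilde g(n + n_1 + d + 1)$ — so that the translation from the $f$-indexed to the $i^*$-indexed statement remains valid across the full interval $[m; m+g(m)]$.
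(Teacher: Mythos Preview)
Your proposal is correct and follows essentially the same approach as the paper: derive the telescoped contraction inequality $\Vert x_{f(k+1)}-y_{f(k)}\Vert\le c\,\Vert x_{f(k)}-y_{f(k)}\Vert+\tilde\varepsilon$ for $k\ge n_1$ from pseudocontractiveness and conditions \ref{iii}--\ref{v}, insert the resolvent drift bound $\Vert y_{f(k)}-y_{f(k-1)}\Vert\le\tilde\varepsilon$ supplied by $\tilde\Psi_f$, iterate $d$ times, apply Lemma \ref{le:metaStar}, and finally use the non-contractive companion estimate (the paper's inequality \eqref{eq:ind2} with $i=f(k^*)$, $n=k$) to pass from block endpoints to arbitrary indices. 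The only cosmetic discrepancy is that the paper carries out the $d$-fold iteration on the norm inequality (with $c=\exp(-\delta/2)$) rather than on the squared norm, and obtains the factor $(1-2\lambda_{n-1}\theta_i)$ rather than $(1-\lambda_j\theta_{f(n)})$ in the one-step estimate; neither affects the structure of the argument.
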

    
    \begin{remark}
     Observe that the bound given in Theorem \ref{PseudoMonotoneQuant} is independent of the operator $T$ and the space $H$. Moreover, it is also highly uniform with respect to the domain $C$ (dependence only via an upper bound on the diameter $\diam C$) and the choice of the parameter sequences $(\lambda_n)$ and $(\theta_n)$ (dependence only via the moduli $\varphi_1,\varphi_2,\varphi_3$, $\delta$, $n_0$ and $f$).
    \end{remark}

    \begin{proof}
      Since $T$ is pseudocontractive, $U:=I-T$ is monotone. Moreover,
      \begin{align*}
       y_i&=\frac{1}{1+\theta_i}Ty_i+\frac{\theta_i}{1+\theta_i}z\\
	  &=\frac{1}{1+\theta_i}(I-U)(y_i)+\frac{\theta_i}{1+\theta_i}z\\
	  &=\frac{1}{1+\theta_i}y_i-\frac{1}{1+\theta_i}Uy_i+\frac{\theta_i}{1+\theta_i}z,
      \end{align*}
    which is equivalent to 
    \begin{equation*}
     0=\left(1-\frac{1}{1+\theta_i}\right)y_i+\frac{1}{1+\theta_i}Uy_i-\frac{\theta_i}{1+\theta_i}z=\frac{\theta_i}{1+\theta_i}(y_i-z)+\frac{1}{1+\theta_i}Uy_i,
    \end{equation*}
    so $\theta_i(y_i-z)+Uy_i=0$. Moreover, the Bruck iteration rewritten in terms of $U$ reads
    \begin{align*}
     x_{n+1}&=(1-\lambda_n)x_n+\lambda_nTx_n+\lambda_n\theta_n(z-x_n)\\
	    &=x_n-\lambda_n\left(x_n-Tx_n+\theta_n(x_n-z)\right)\\
	    &=x_n-\lambda_n\left(Ux_n+\theta_n(x_n-z)\right).
    \end{align*}
    Therefore, for $n>i\ge2$,
    \begin{equation*}
     x_n-y_i=x_{n-1}-y_i-\lambda_{n-1}(Ux_{n-1}+\theta_{n-1}(x_{n-1}-z)),
    \end{equation*}
    so
    \begin{align}
     \Vert x_n-y_i\Vert^2&=\Big\langle x_{n-1}-y_i-\lambda_{n-1}(Ux_{n-1}+\theta_{n-1}(x_{n-1}-z)),\notag\\
			  &\qquad\qquad x_{n-1}-y_i-\lambda_{n-1}(Ux_{n-1}+\theta_{n-1}(x_{n-1}-z))\Big\rangle\notag\\
			  &=\Vert x_{n-1}-y_i\Vert^2-2\lambda_{n-1}\langle x_{n-1}-y_i,Ux_{n-1}+\theta_{n-1}(x_{n-1}-z)\rangle\notag\\
			  &\qquad+\lambda_{n-1}^2\Vert Ux_{n-1}+\theta_{n-1}(x_{n-1}-z)\Vert^2\notag\\
			  &=\Vert x_{n-1}-y_i\Vert^2+\lambda_{n-1}^2\Vert Ux_{n-1}+\theta_{n-1}(x_{n-1}-z)\Vert^2\notag\\
			  &\qquad-2\lambda_{n-1}\theta_{n-1}\langle x_{n-1}-y_i,x_{n-1}-z\rangle\notag\\
			  &\qquad-2\lambda_{n-1}\langle x_{n-1}-y_i,Ux_{n-1}\rangle\notag\\
			  &=\Vert x_{n-1}-y_i\Vert^2+\lambda_{n-1}^2\Vert Ux_{n-1}+\theta_{n-1}(x_{n-1}-z)\Vert^2\notag\\
			  &\qquad+2\lambda_{n-1}(\theta_i-\theta_{n-1})\langle x_{n-1}-y_i,x_{n-1}-z\rangle\label{eq:step}\\
			  &\qquad-2\lambda_{n-1}\langle x_{n-1}-y_i,Ux_{n-1}+\theta_i(x_{n-1}-z)\rangle.\notag
    \end{align}
    Since $U$ is monotone and $\theta_i(y_i-z)+Uy_i=0$,
    \begin{align*}
     \langle Ux_{n-1}+\theta_i(x_{n-1}-z),x_{n-1}-y_i\rangle&=\langle Ux_{n-1}+\theta_i(y_i-z),x_{n-1}-y_i\rangle+\theta_i\Vert x_{n-1}-y_i\Vert^2\\
							  &=\langle Ux_{n-1}-Uy_i,x_{n-1}-y_i\rangle+\theta_i\Vert x_{n-1}-y_i\Vert^2\\
							  &\ge\theta_i\Vert x_{n-1}-y_i\Vert^2.
    \end{align*}
    Equation \eqref{eq:step} then implies
    \begin{align*}
     \Vert x_n-y_i\Vert^2&\le(1-2\lambda_{n-1}\theta_i)\Vert x_{n-1}-y_i\Vert^2+\lambda_{n-1}^2\Vert Ux_{n-1}+\theta_{n-1}(x_{n-1}-z)\Vert^2\\
			  &\qquad+2\lambda_{n-1}(\theta_i-\theta_{n-1})\langle x_{n-1}-z,x_{n-1}-y_i\rangle.
    \end{align*}
    Observe that $\Vert Ux_{n-1}+\theta_{n-1}(x_{n-1}-z)\Vert=\Vert x_{n-1}-Tx_{n-1}+\theta_{n-1}(x_{n-1}-z)\Vert$. Since $\diam(C)\le M$, we conclude
    \begin{equation}\label{eq:ind1}
     \Vert x_n-y_i\Vert^2\le\exp(-2\lambda_{n-1}\theta_i)\Vert x_{n-1}-y_i\Vert^2+2M^2\lambda_{n-1}(\theta_i-\theta_{n-1})+4M^2\lambda_{n-1}^2.
    \end{equation}
    We show by induction on $n\ge i$ that
    \begin{equation}\label{eq:ind}
     \Vert x_n-y_i\Vert^2\le\exp\left(-2\theta_k\sum_{j=i}^{n-1}\lambda_j\right)\Vert x_i-y_i\Vert^2+2M^2\sum_{j=i}^{n-1}(\theta_i-\theta_j)\lambda_j+4M^2\sum_{j=i}^{n-1}\lambda_j^2.
    \end{equation}
    \textit{Proof of \eqref{eq:ind}}: For $n=i$ the inequality holds with equality. Suppose that the inequality holds true for some $n\ge i$. Then \eqref{eq:ind1} implies
    \begin{align*}
     \Vert x_{n+1}-y_i\Vert^2&\le\exp(-2\lambda_n\theta_i)\Vert x_n-y_i\Vert^2+2M^2\lambda_n(\theta_i-\theta_n)+4M^2\lambda_n^2\\
			      &\le\exp(-2\lambda_n\theta_i)\cdot\Bigg\{\exp\left(-2\theta_i\sum_{j=i}^{n-1}\lambda_j\right)\Vert x_i-y_i\Vert^2\\
			      &\qquad\qquad\qquad\qquad\qquad+2M^2\sum_{j=i}^{n-1}(\theta_i-\theta_j)\lambda_j+4M^2\sum_{j=i}^{n-1}\lambda_j^2\Bigg\}\\
			      &\qquad+2M^2\lambda_n(\theta_i-\theta_n)+4M^2\lambda_n^2\\
			      &=\exp\left(-2\theta_i\sum_{j=i}^{n}\lambda_j\right)\Vert x_i-y_i\Vert^2+2M^2\sum_{j=i}^{n}(\theta_i-\theta_j)\lambda_j+4M^2\sum_{j=i}^{n}\lambda_j^2,
    \end{align*}
    which is what we needed to show.
    
    Since $\theta_i-\theta_j\le\theta_i-\theta_n$ for $i\le j\le n$, \eqref{eq:ind} implies
    \begin{equation}\label{eq:ind2}
     \Vert x_n-y_i\Vert^2\le\exp\left(-2\theta_i\sum_{j=i}^{n-1}\lambda_j\right)\Vert x_i-y_i\Vert^2+2M^2(\theta_i-\theta_n)\sum_{j=i}^{n-1}\lambda_j+4M^2\sum_{j=i}^{n-1}\lambda_j^2,\textmd{ for all }n\ge i.
    \end{equation}
    Now let $f(n)$ be the subsequence of Definition \ref{def:acceptably}. We now prove that $(x_{f(n)})$ is Cauchy. Taking 
    $i=f(k)$ and $n=f(k+1)$ in \eqref{eq:ind2}, we get
    \begin{align}\label{eq:crucSubs}
     \Vert x_{f(k+1)}-y_{f(k)}\Vert^2&\le\exp\left(-2\theta_{f(k)}\sum_{j=f(k)}^{f(k+1)}\lambda_j\right)\cdot\exp(2\theta_{f(k)}\lambda_{f(k+1)})\cdot\Vert x_{f(k)}-y_{f(k)}\Vert^2\notag\\
		    &\qquad+2M^2(\theta_{f(k)}-\theta_{f(k+1)})\cdot\sum_{j=f(k)}^{f(k+1)}\lambda_j+4M^2\sum_{j=f(k)}^{f(k+1)}\lambda_j^2.
    \end{align}
    Now observe that
    \begin{equation*}
     \exp\left(-2\theta_{f(k)}\sum_{j=f(k)}^{f(k+1)}\lambda_j\right)\le\exp(-2\delta)<1,\quad\textmd{for all }k\ge n_0.
    \end{equation*}
    Moreover, $(\theta_n)$ is a null sequence with modulus $\varphi_1$. Thus, $\exp(2\theta_{f(k)}\lambda_{f(k+1)})\le\exp(2\theta_k)\le\exp(\delta)$ for all $k\ge\varphi_1(\delta/2)$. Furthermore, for all $k\ge\max\{\varphi_2(\tilde\varepsilon^2/4M^2),\varphi_3(\tilde\varepsilon^2/8M^2)\}$, the remainder term in \eqref{eq:crucSubs} is less than $\tilde\varepsilon^2$. In total, 
    \begin{equation*}
     \Vert x_{f(k+1)}-y_{f(k)}\Vert^2\le\exp(-\delta)\cdot\Vert x_{f(k)}-y_{f(k)}\Vert^2+\tilde\varepsilon^2,\quad\textmd{for all }k\ge n_1
    \end{equation*}
    since $n_1=\max\{n_0,\varphi_1(\delta/2),\varphi_2(\tilde\varepsilon^2/4M^2),\varphi_3(\tilde\varepsilon^2/2M^2)\}$. Because $c=\exp(-\delta/2)$, we then get
    \begin{align}\label{eq:mono}
     \Vert x_{f(k+1)}-y_{f(k)}\Vert&\le c\cdot\Vert x_{f(k)}-y_{f(k)}\Vert+\tilde\varepsilon\notag\\
				   &\le c\cdot\Vert x_{f(k)}-y_{f(k-1)}\Vert+c\cdot\Vert y_{f(k-1)}-y_{f(k)}\Vert+\tilde\varepsilon.
    \end{align}

    Now observe that since $(y_n)$ is metastable with rate $\Psi$, the subsequence $(y_{f(n)})$ is metastable with rate $\tilde\Psi$ by Lemma \ref{le:metaSubs}. Thus, there exists an integer $n\le\tilde\Psi(\tilde\varepsilon,g_d)$ such that $\Vert y_{f(k)}-y_{f(j)}\Vert\le\tilde\varepsilon$ for all $k,j\in[n;n+g_d(n)]$. Taking $n_2:=n+n_1$, we have on the one hand $n_2\ge n_1$, and $\Vert y_{f(k)}-y_{f(j)}\Vert\le\tilde\varepsilon$ for all $k,j\in[n_2;n_2+d+1+\tilde g(n_2+d+1)]$ on the other. Setting $j=k-1$, we conclude
    \begin{equation}\label{eq:mono1}
     \Vert y_{f(k)}-y_{f(k-1)}\Vert\le\tilde\varepsilon,\quad\textmd{for all }k\in[n_2+1;n_2+d+1+\tilde g(n_2+d+1)].
    \end{equation}
    Suppose now that $k\in[n_2+d;n_2+d+\tilde g(n_2+d+1)]$. Then \eqref{eq:mono1} and \eqref{eq:mono} yield
    \begin{align*}
     \Vert x_{f(k+1)}-y_{f(k)}\Vert&\le c\cdot\Vert x_{f(k)}-y_{f(k-1)}\Vert+2\tilde\varepsilon\\
				   &\le c\cdot(c\cdot\Vert x_{f(k-1)}-y_{f(k-2)}\Vert+2\tilde\varepsilon)+2\tilde\varepsilon\\
				   &=c^2\cdot\Vert x_{f(k-1)}-y_{f(k-2)}\Vert+2\tilde\varepsilon\cdot c+2\tilde\varepsilon\\
				   &\le\ldots\\
				   &=c^{d-1}\cdot\Vert x_{f(k-d+2)}-y_{f(k-d+1)}\Vert+2\tilde\varepsilon\sum_{k=0}^{d-2}c^k\\
				   &\le c^d\cdot\Vert x_{f(k-d+1)}-y_{f(k-d)}\Vert+c^{d-1}\cdot\Vert y_{f(k-d+1)}-y_{f(k-d)}\Vert+2\tilde\varepsilon\sum_{k=0}^{d-2}c^k\\
				   &\le c^d\cdot M+2\tilde\varepsilon\sum_{k=0}^{d-1}c^k\\
				   &\le c^d\cdot M+2\tilde\varepsilon\sum_{k=0}^{\infty}c^k\\
				   &=c^d\cdot M+\frac{2\tilde\varepsilon}{1-c}.
    \end{align*}
    Since $d\ge\log_c(\varepsilon/8M)$ and $\tilde\varepsilon=\frac{1-c}{16}\cdot\varepsilon$, we have
    \begin{equation*}
     \Vert x_{f(k+1)}-y_{f(k)}\Vert\le\frac{\varepsilon}{4},\textmd{ for all } k\in[n_2+d;n_2+d+\tilde g(n_2+d+1)].
    \end{equation*}
    Therefore, setting $n_3:=n_2+d+1$ and using \eqref{eq:mono1},
    \begin{align*}
     \Vert x_{f(k)}-y_{f(k)}\Vert&\le\Vert x_{f(k)}-y_{f(k-1)}\Vert+\Vert y_{f(k)}-y_{f(k-1)}\Vert\le\frac{\varepsilon}{4}+\tilde\varepsilon\\
     &<\frac{\varepsilon}{3},\quad\textmd{for all } k\in[n_3;n_3+\tilde g(n_3)].
    \end{align*}
    By Lemma \ref{le:metaStar}
    \begin{equation}\label{eq:metaDist}
     \Vert x_{f(k^*)}-y_{f(k^*)}\Vert\le\varepsilon/3,\textmd{ for all }k\in[f(n_3);f(n_3)+g(f(n_3))]
    \end{equation}

    Now, for $k\ge f(0)$, observe that $k^*$ denotes the unique integer such that $f(k^*)\le k<f(k^*+1)$. Take $n=k$, $i=f(k^*)$ in \eqref{eq:ind2}; since the exponential factor is less than or equal to $1$, 
    \begin{align*}
     \Vert x_k-y_{f(k^*)}\Vert^2&\le\Vert x_{f(k^*)}-y_{f(k^*)}\Vert^2+2M^2(\theta_{f(k^*)}-\theta_k)\sum_{j=f(k^*)}^{k-1}\lambda_j+4M^2\sum_{j=f(k^*)}^{k-1}\lambda_j^2\\
			     &\le\Vert x_{f(k^*)}-y_{f(k^*)}\Vert^2+2M^2(\theta_{f(k^*)}-\theta_{f(k^*+1)})\sum_{j=f(k^*)}^{f(k^*+1)}\lambda_j+4M^2\sum_{j=f(k^*)}^{f(k^*+1)}\lambda_j^2.
    \end{align*}
    Observe that the latter two terms become less than $\varepsilon^2/3$ whenever $k^*\ge k_0$ since, by definition, $k_0=\max\{\varphi_2(\varepsilon^2/6M^2),\varphi_3(\varepsilon^2/12M^2)\}$. But this is always the case whenever $k\ge f(k_0)$ since then $k^*\ge (f(k_0))^*=k_0$ by the monotonicity of $(\cdot)^*$. Therefore,
    \begin{equation}\label{eq:bla}
     \Vert x_k-y_{f(k^*)}\Vert^2\le\Vert x_{f(k^*)}-y_{f(k^*)}\Vert^2+\frac{2\varepsilon^2}{3},\textmd{ for all }k\ge f(k_0).
    \end{equation}
   Since $f(n_3)=f(n_2+d+1)\ge f(d)\ge d\ge f(k_0)$, equations \eqref{eq:metaDist}, \eqref{eq:bla} together imply
    \begin{align}\label{eq:bla1}
     \Vert x_k-y_{f(k^*)}\Vert&\le\varepsilon,\quad\textmd{for all }k\in[f(n_3);f(n_3)+g(f(n_3))].
    \end{align}
    
    Now recall that $\Vert y_{f(i)}-y_{f(j)}\Vert\le\tilde\varepsilon$ for all $i,j\in[n_2;n_2+d+1+\tilde g(n_2+d+1)]$. 
    Again by Lemma \ref{le:metaStar}, this implies
    \begin{equation}\label{eq:bla2}
     \Vert y_{f(i^*)}-y_{f(j^*)}\Vert\le\tilde\varepsilon\le\varepsilon,\textmd{ for all }i,j\in[f(n_3),f(n_3)+g(f(n_3))].
    \end{equation}
    Therefore, $f(n_3)=f(n_2+d+1)\le f(\tilde\Psi(g_d,\tilde\varepsilon)+n_1+d+1)$ satisfies the claim.
    \end{proof}

  \begin{theorem}\label{thm:pseudoQuant2}
    In the situation of Theorem \ref{PseudoMonotoneQuant}, $(x_n)$ is metastable with rate $\Phi'(\varepsilon,g):=\Phi(\varepsilon/3,g)$.
  \end{theorem}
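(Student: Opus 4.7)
The plan is to derive metastability of $(x_n)$ from Theorem \ref{PseudoMonotoneQuant} by a direct triangle inequality argument, using the two estimates that theorem provides for the $x_i$--$y_{f(i^*)}$ pair and for the $y_{f(i^*)}$--$y_{f(j^*)}$ pair.

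Concretely, given $\varepsilon > 0$ and $g:\NN\to\NN$, I would apply Theorem \ref{PseudoMonotoneQuant} with accuracy $\varepsilon/3$ and the same counterfunction $g$. This produces some $n \le \Phi(\varepsilon/3, g) = \Phi'(\varepsilon, g)$ such that, simultaneously for all $i, j \in [n; n+g(n)]$,
\[
\Vert x_i - y_{f(i^*)}\Vert \le \varepsilon/3 \quad\text{and}\quad \Vert y_{f(i^*)} - y_{f(j^*)}\Vert \le \varepsilon/3.
\]
Setting $j$ in the first inequality (and noting the estimate holds for every index in the window, hence for $j$ in place of $i$ as well) yields $\Vert x_j - y_{f(j^*)}\Vert \le \varepsilon/3$.

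Then the triangle inequality
\[
\Vert x_i - x_j\Vert \le \Vert x_i - y_{f(i^*)}\Vert + \Vert y_{f(i^*)} - y_{f(j^*)}\Vert + \Vert y_{f(j^*)} - x_j\Vert \le 3\cdot\frac{\varepsilon}{3} = \varepsilon
\]
gives the required bound for all $i, j \in [n; n+g(n)]$, establishing that $\Phi'(\varepsilon, g) = \Phi(\varepsilon/3, g)$ is indeed a rate of metastability for $(x_n)$. There is no genuine obstacle here: the hard analytic and combinatorial work has already been done in Theorem \ref{PseudoMonotoneQuant}, which was explicitly engineered to bound the three pieces needed to close the triangle, so this theorem is a one-line corollary modulo the rescaling of $\varepsilon$.
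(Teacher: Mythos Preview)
Your proposal is correct and matches the paper's own proof essentially verbatim: the paper applies the triangle inequality $\Vert x_i-x_j\Vert\le\Vert x_i-y_{f(i^*)}\Vert+\Vert x_j-y_{f(j^*)}\Vert+\Vert y_{f(i^*)}-y_{f(j^*)}\Vert$ and invokes the two bounds from Theorem~\ref{PseudoMonotoneQuant} at accuracy $\varepsilon/3$, exactly as you do.
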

  
  \begin{proof}
    Since $\tilde\varepsilon<\varepsilon$ and $\Vert x_i-x_j\Vert\le\Vert x_i-y_{f(i^*)}\Vert+\Vert x_j-y_{f(j^*)}\Vert+\Vert y_{f(i^*)}-y_{f(j^*)}\Vert$, equations \eqref{eq:bla1} and \eqref{eq:bla2} imply
    \begin{equation*}
      \forall\varepsilon>0\forall g:\NN\to\NN\exists n\le\Phi(\varepsilon/3,g)\forall i,j\in[n;n+g(n)]\left(\Vert x_i-x_j\Vert<\varepsilon\right),
     \end{equation*}
     which is what we needed to show.
  \end{proof}

   \begin{theorem}\label{thm:deliver} In the situation of Theorem \ref{PseudoMonotoneQuant},
   \begin{multline*}
    \forall\varepsilon>0\,\forall g:\NN\to\NN\,\exists n\le\Phi''(\varepsilon,g)\,\forall i,j\in[n;n+g(n)]\\\big(\Vert x_i-x_j\Vert<\varepsilon\,\wedge\Vert x_i-y_i\Vert<\varepsilon\,\wedge\,\Vert y_i-Ty_i\Vert<\varepsilon\big),
   \end{multline*}
  where $\Phi''(\varepsilon,g):=\hat\Phi(\varepsilon/3,)$, and $\hat\Phi$ is defined like $\Phi$ in Theorem \ref{PseudoMonotoneQuant}, but with $\hat g_d$, defined by
  \begin{equation*}
   \hat g_d(n):=d+\hat n_1+1+\hat g(n+d+\hat n_1+1).
  \end{equation*}
  instead of $g_d$, where $\hat g(n):=(f(n)+g(f(n)))^*-n+1$ and $\hat n_1:=\max\{n_1,\varphi_1(\varepsilon/M)\}$.
  
  Moreover, we can take $\Psi(\varepsilon,g):=\tilde g^{(\lceil 16d^2/\varepsilon^2\rceil)}(1)$, where $\tilde g(n)=n+1+g(n+1)$.
  \end{theorem}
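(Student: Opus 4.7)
We run the proof of Theorem \ref{PseudoMonotoneQuant} verbatim, but substituting $\hat n_1 := \max\{n_1, \varphi_1(\varepsilon'/M)\}$ for $n_1$ and the counter-function $\hat g_d$ for $g_d$ throughout; set $\varepsilon' := \varepsilon/3$ and $\hat n_3 := n + \hat n_1 + d + 1$. This already delivers the analog of \eqref{eq:bla1}, namely $\Vert x_k - y_{f(k^*)}\Vert \le \varepsilon'$ for all $k \in [f(\hat n_3); f(\hat n_3) + g(f(\hat n_3))]$, at the candidate witness $n_{\text{wit}} := f(\hat n_3) \le \hat\Phi(\varepsilon', g) = \Phi''(\varepsilon, g)$.

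The bound $\Vert y_i - Ty_i\Vert < \varepsilon$ falls out directly from the defining equation for $y_i$: rearranging gives $y_i - Ty_i = \theta_i(z - y_i)$, hence $\Vert y_i - Ty_i\Vert \le \theta_i M \le \varepsilon'$ whenever $i \ge \varphi_1(\varepsilon'/M)$. Since every $i \in [n_{\text{wit}}; n_{\text{wit}} + g(n_{\text{wit}})]$ satisfies $i \ge n_{\text{wit}} = f(\hat n_3) \ge \hat n_3 \ge \hat n_1 \ge \varphi_1(\varepsilon'/M)$, this is exactly the reason for the enlargement $\hat n_1$.

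The bound $\Vert x_i - y_i\Vert < \varepsilon$ requires metastability of the \emph{full} sequence $(y_n)$ on $[f(\hat n_3); f(\hat n_3) + g(f(\hat n_3))]$, since in the split $\Vert x_i - y_i\Vert \le \Vert x_i - y_{f(i^*)}\Vert + \Vert y_{f(i^*)} - y_i\Vert$ only the first term is covered by the argument of Theorem \ref{PseudoMonotoneQuant}. This is precisely the role of the extra ``$+1$'' in $\hat g(n) = (f(n) + g(f(n)))^* - n + 1$: when we apply $\tilde\Psi(\tilde\varepsilon, \hat g_d) = \Psi(\tilde\varepsilon, (\hat g_d)_f)$, the first displayed consequence in the proof of Lemma \ref{le:metaSubs} yields $\Vert y_i - y_j\Vert \le \tilde\varepsilon$ for \emph{all} $i, j \in [n; f(n + \hat g_d(n))]$. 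Since $n + \hat g_d(n) = (f(\hat n_3) + g(f(\hat n_3)))^* + 1$ and $f(m^* + 1) > m$ by the very definition of $(\cdot)^*$, the upper endpoint strictly exceeds $f(\hat n_3) + g(f(\hat n_3))$; thus both $f(i^*) \le i$ and $i$ lie in the metastable range, and $\Vert y_{f(i^*)} - y_i\Vert \le \tilde\varepsilon \le \varepsilon'$.

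Combining the three pieces via triangle inequalities -- $\Vert x_i - x_j\Vert$ through $y_{f(i^*)}, y_{f(j^*)}$ as in Theorem \ref{thm:pseudoQuant2}, $\Vert x_i - y_i\Vert$ via $y_{f(i^*)}$, and $\Vert y_i - Ty_i\Vert$ directly -- bounds each by $\varepsilon$, concluding the main statement. The moreover clause $\Psi(\varepsilon, g) := \tilde g^{(\lceil 16 d^2/\varepsilon^2\rceil)}(1)$ is independent of this argument: it is an explicit rate of metastability for $(y_n)$ extracted from the quantitative analysis of the resolvent path $(z_t)$ already performed in \cite{KoernleinKohlenbach(13)} (with $y_n = z_{1/(1+\theta_n)}$), the iteration-of-$\tilde g$ shape being the standard output when Cauchyness is proved via contraction along some subsequence. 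The main obstacle in the whole proof is the bookkeeping: verifying that a single invocation of $\tilde\Psi$ with $\hat g_d$ simultaneously produces the subsequence closeness needed for the $\Vert x_i - y_{f(i^*)}\Vert$ estimate \emph{and} the full-sequence closeness on $[f(\hat n_3); f(\hat n_3) + g(f(\hat n_3))]$ needed for the $\Vert x_i - y_i\Vert$ estimate, while the shift by $\hat n_1$ simultaneously absorbs the requirement for $\Vert y_i - Ty_i\Vert$.
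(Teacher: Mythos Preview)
Your proposal is correct and follows essentially the same route as the paper's proof: rerun Theorem~\ref{PseudoMonotoneQuant} with $\hat n_1$ and $\hat g_d$, use the enlarged $\hat n_1\ge\varphi_1(\cdot)$ to force $\theta_i$ small enough for the $\Vert y_i-Ty_i\Vert$ bound, and exploit the extra ``$+1$'' in $\hat g$ so that the single call to $\Psi(\tilde\varepsilon,(\hat g_d)_f)$ already yields full-sequence closeness $\Vert y_i-y_j\Vert\le\tilde\varepsilon$ on $[n;f(n+\hat g_d(n))]\supseteq[f(\hat n_3);f(\hat n_3)+g(f(\hat n_3))]$, whence $\Vert y_{f(i^*)}-y_i\Vert$ is controlled. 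One small slip: you set $\hat n_1:=\max\{n_1,\varphi_1(\varepsilon'/M)\}$ with $\varepsilon'=\varepsilon/3$, whereas the theorem as stated has $\varphi_1(\varepsilon/M)$; the latter already suffices since the conclusion only asks for $\Vert y_i-Ty_i\Vert<\varepsilon$, not $<\varepsilon'$ (and indeed $\Vert y_i-Ty_i\Vert\le\theta_i M\le\varepsilon$ once $i\ge\varphi_1(\varepsilon/M)$).
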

  
  \begin{proof}
   By altering the definition of $g_d$, the point $f(n_3)$ that satisfies the conclusion of Theorem \ref{thm:pseudoQuant2} also satisfies $f(n_3)\ge n_3=n_2+d+1=n_0+\hat n_1+d+1\ge\varphi_1(\varepsilon,g)$. Therefore, $(x_n)$ is metastable, and
   \begin{equation*}
    \Vert y_i-Ty_i\Vert=\frac{\theta_i}{1+\theta_i}\Vert Ty_i-z\Vert\le\theta_i\cdot M\le\varepsilon,\text{ for all }i\ge n.
   \end{equation*}
   It remains to verify that $\Vert x_i-y_i\Vert\le\varepsilon$ on $[f(n_3);f(n_3)+g(f(n_3))]$. To this end, observe that
   \begin{equation}\label{eq:conf}
    \Vert y_i-y_j\Vert\le\varepsilon/3,\text{ for all }i,j\in[f(n_3);f(n_3+\hat g(n_3))].
   \end{equation}
   Since $f(k^*+1)>k\ge f(k^*)$ for all $k\ge f(0)$, we conclude $f(n_3+\hat g(n_3))=f((f(n_3)+g(f(n_3)))^*+1)\ge f(n_3)+g(f(n_3))$. Moreover, $f((f(n_3))^*)=f(n_3)$. Therefore, \eqref{eq:conf} implies
   \begin{equation*}
    \Vert y_k-y_{f(k^*)}\Vert\le\varepsilon/3,\text{ for all }k\in[f(n_3),f(n_3)+g(f(n_3))],
   \end{equation*}
   which, using \eqref{eq:bla1}, implies for all $k\in[f(n_3),f(n_3)+g(f(n_3))]$ 
   \begin{equation*}
    \Vert x_k-y_k\Vert\le\Vert x_k-y_{f(k^*)}\Vert+\Vert y_{f(k^*)}-y_k\Vert\le\varepsilon/3+\varepsilon/3<\varepsilon.
   \end{equation*}
   That we may choose $\Psi(\varepsilon,g):=\tilde g^{(\lceil 16d^2/\varepsilon^2\rceil)}(1)$ follows from Theorem 2.8 and Corollary 2.9 of \cite{KoernleinKohlenbach(13)}.
  \end{proof}

  \begin{remark}
   Observe that Theorems \ref{PseudoMonotoneQuant}, \ref{thm:pseudoQuant2} and \ref{thm:deliver} require only the demicontinuity of $T$. Therefore, model-theoretic approaches (cf.~\cite{HensonIovino(02)}) are not applicable, as these always require norm-continuity.
  \end{remark}
  
  \begin{remark}
   Suppose $\Psi$ does not depend on $g$ for a concrete choice of the input. Then metastability for $(y_n)$ would read
   \begin{equation*}
     \forall\varepsilon>0\,\forall g:\NN\to\NN\,\exists n\le\Psi(\varepsilon)\,\forall i,j\in[n;n+g(n)]\,\big(\Vert x_i-x_j\Vert<\varepsilon\big).
   \end{equation*}
   This is logically equivalent to
   \begin{equation*}
    \forall\varepsilon>0\,\forall g:\NN\to\NN\,\exists n\le\Psi(\varepsilon)\,\forall i,j\ge n\,\big(\Vert x_i-x_j\Vert<\varepsilon\big),
   \end{equation*}
   i.e.~a rate of convergence. In this case, we would get in Theorem \ref{PseudoMonotoneQuant} a rate of convergence $\Phi(\varepsilon):=f(\Psi(\tilde\varepsilon)+n_1+d+1)$, where, as before
         \begin{align*}
       &n_1:=\max\{n_0,\varphi_1(\delta/2),\varphi_2(\tilde\varepsilon/4M^2),\varphi_3(\tilde\varepsilon^2/8M^2)\},&&k_0=\max\{\varphi_2(\varepsilon^2/6M^2),\varphi_3(\varepsilon^2/12M^2)\},\\
       &d:=\max\{f(k_0),\lceil\log_c(\varepsilon/8M)\rceil\},&&\tilde\varepsilon=\frac{1-c}{16}\cdot\varepsilon,\\
       &c:=\exp(-\delta/2).&&
      \end{align*}
  \end{remark}

        \begin{theorem}\label{thm:unifcont}
     Suppose that in the situation of Theorem \ref{PseudoMonotoneQuant}, $T$ is additionally uniformly continuous on $C$ with modulus $\omega$. For $g:\NN\to\NN$ define $g_{b}(n):=b+g(n+b)$, where $b:=f((\varphi_1(\varepsilon/3M))^*+1)$. Then
     \begin{equation*}
       \forall\varepsilon>0\forall g:\NN\to\NN\exists n\le\Delta(g,\varepsilon)\forall i,j\in[n;n+g(n)]\bigl(\Vert x_i-x_j\Vert\le\varepsilon\wedge\Vert x_i-Tx_i\Vert\le\varepsilon\bigr),
      \end{equation*}
      where $\Delta(g,\varepsilon):=\Phi(g_b,\min\{\varepsilon/3,\omega(\varepsilon/3M)\})+b$.
    \end{theorem}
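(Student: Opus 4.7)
The plan is to reduce Theorem~\ref{thm:unifcont} directly to Theorem~\ref{PseudoMonotoneQuant} by combining the latter with the triangle inequality and the uniform continuity of~$T$. Two ingredients beyond the previous results enter: uniform continuity, which lets us pass from closeness of $x_i$ and $y_{f(i^*)}$ to closeness of their $T$-images, and the rate $\varphi_1$ for $\theta_i\to 0$, which via the identity $\Vert y_i - Ty_i\Vert = \frac{\theta_i}{1+\theta_i}\Vert Ty_i - z\Vert \le \theta_i M$ (already used in the proof of Theorem~\ref{thm:deliver}) makes the displacement $\Vert y_i - Ty_i\Vert$ small. The shift~$b$ encodes how large an index has to be so that $f(i^*) > \varphi_1(\varepsilon/(3M))$, while the shrunk tolerance $\varepsilon_0 := \min\{\varepsilon/3,\omega(\varepsilon/(3M))\}$ simultaneously keeps $\Vert x_i - y_{f(i^*)}\Vert$ below $\omega(\varepsilon/(3M))$ (for the uniform continuity step) and below $\varepsilon/3$ (for the plain triangle estimate).

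Concretely, I would invoke Theorem~\ref{PseudoMonotoneQuant} with tolerance $\varepsilon_0$ and the shifted counterfunction $g_b$, obtaining an index $m \le \Phi(\varepsilon_0,g_b)$ such that on $[m;m+g_b(m)] = [m;m+b+g(m+b)]$ both $\Vert x_i - y_{f(i^*)}\Vert \le \varepsilon_0$ and $\Vert y_{f(i^*)} - y_{f(j^*)}\Vert \le \varepsilon_0$ hold. Setting $n := m+b$, the interval $[n;n+g(n)]$ is contained in $[m;m+g_b(m)]$, and $n \le \Phi(\varepsilon_0,g_b)+b = \Delta(g,\varepsilon)$ while $n \ge b$. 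A single triangle inequality then yields metastability
\[
\Vert x_i - x_j\Vert \le \Vert x_i - y_{f(i^*)}\Vert + \Vert y_{f(i^*)} - y_{f(j^*)}\Vert + \Vert y_{f(j^*)} - x_j\Vert \le 3\varepsilon_0 \le \varepsilon
\]
on $[n;n+g(n)]$, exactly as in Theorem~\ref{thm:pseudoQuant2}.

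For the asymptotic regularity part I would decompose
\[
\Vert x_i - Tx_i\Vert \le \Vert x_i - y_{f(i^*)}\Vert + \Vert y_{f(i^*)} - Ty_{f(i^*)}\Vert + \Vert Ty_{f(i^*)} - Tx_i\Vert.
\]
The first summand is $\le \varepsilon_0 \le \varepsilon/3$ directly from Theorem~\ref{PseudoMonotoneQuant}. For the second, the definition $b := f((\varphi_1(\varepsilon/(3M)))^*+1)$ together with the monotonicity of $(\cdot)^*$ and the identity $(f(k))^* = k$ recalled after Definition~\ref{def:acceptablyQuant} give $f(i^*) \ge b > \varphi_1(\varepsilon/(3M))$ whenever $i \ge n \ge b$, whence $\theta_{f(i^*)} \le \varepsilon/(3M)$ and $\Vert y_{f(i^*)} - Ty_{f(i^*)}\Vert \le \theta_{f(i^*)}M \le \varepsilon/3$. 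The third summand is at most $\varepsilon/3$ by uniform continuity applied to $\Vert x_i - y_{f(i^*)}\Vert \le \varepsilon_0 \le \omega(\varepsilon/(3M))$. Summing the three bounds gives $\Vert x_i - Tx_i\Vert \le \varepsilon$. The only real bookkeeping obstacle is verifying that the shift~$b$ indeed forces $f(i^*)$ past $\varphi_1(\varepsilon/(3M))$; everything else is a triangle-inequality assembly of already-established facts.
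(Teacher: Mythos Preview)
Your proposal is correct and follows essentially the same route as the paper's own proof: apply Theorem~\ref{PseudoMonotoneQuant} with the shifted counterfunction $g_b$ and the shrunk tolerance $\min\{\varepsilon/3,\omega(\varepsilon/3M)\}$, set $n:=m+b$, use the definition of $b$ together with $(f(k))^*=k$ to force $f(i^*)>\varphi_1(\varepsilon/3M)$ and hence $\Vert y_{f(i^*)}-Ty_{f(i^*)}\Vert\le\varepsilon/3$, and then assemble $\Vert x_i-Tx_i\Vert\le\varepsilon$ via the same three-term triangle inequality (with uniform continuity handling the $T$-image term) while metastability follows exactly as in Theorem~\ref{thm:pseudoQuant2}. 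The only cosmetic difference is the order in which you treat the two conclusions.
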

    \begin{proof}
     By Theorem \ref{PseudoMonotoneQuant}, there exists a $k\le\Phi(g_b,\min\{\varepsilon/3,\omega(\varepsilon/3M)\})$ such that for $n:=k+b$,
     \begin{equation*}
      \Vert x_i-y_{f(i^*)}\Vert\le\min\{\varepsilon/3,\omega(\varepsilon/3M)\},\textmd{ for all }i,j\in[n;n+g(n)].
     \end{equation*}
    Now observe that $b^*=(\varphi_1(\varepsilon/3M))^*+1$ since $(f(k))^*=k$ for all nonnegative integers $k$. Therefore,
    $f(n^*)\ge f(b^*)=f((\varphi_1(\varepsilon/3M))^*+1)>\varphi_1(\varepsilon/3M)$. Thus, $\theta_{f(i^*)}\le\varepsilon/3M$ for all $i\ge n$. Consequently
    \begin{equation*}
     \Vert y_{f(i^*)}-Ty_{f(i^*)}\Vert=\frac{\theta_{f(i^*)}}{1+\theta_{f(i^*)}}\Vert Ty_{f(i^*)}-z\Vert\le\theta_{f(i^*)}\cdot M\le\frac{\varepsilon}3,\textmd{ for all }i\ge n.
    \end{equation*}
    Therefore,
    \begin{equation*}
     \Vert x_i-Tx_i\Vert\le\Vert x_i-y_{f(i^*)}\Vert+\Vert  y_{f(i^*)}-Ty_{f(i^*)}\Vert+\Vert Tx_i-Ty_{f(i^*)}\Vert\le\varepsilon,\textmd{ for all }i\in[n;n+g(n)].	
    \end{equation*}
    That $\Vert x_i-x_j\Vert\le\varepsilon$ on $[n;n+g(n)]$ follows as in Corollary \ref{thm:pseudoQuant2}.
    \end{proof}

    \section{Application to Concrete Instances}\label{sec:ex}
    
    In this section, we compute explicitly the moduli $\varphi_1,\varphi_2,\varphi_3,n_0$ and $\delta$ for the two examples of parameter sequences of Remark \ref{rem:ex}. We then compare the bound to the one obtained in \cite{Kohlenbach(11)} for Halpern iterations of nonexpansive mappings.
    
    \subsection{Example 1}\label{sec:ex1}
    Suppose $p$ and $q$ are real numbers in $(0,1)$ such that $0<q<\min\{p,1-p\}$, and take $\lambda_n:=n^{-p}$ and $\theta_n:=n^{-q}$. Set $r=(p+q)/2$. There are two cases to consider, namely $p\ge1/2$ and $p<1/2$.
    
    If $p<1/2$, then $1/2>p>r>q$, so we conclude $1/2<1-p<1-r<1-q$, whence $0<1-2p<1-r-p<1-p-q$. Then,
    \begin{equation*}
     \frac{1-2p}{1-p}<\frac{1-r-p}{1-p}<\frac{1-p-q}{1-p}=1-\frac{q}{1-p},
    \end{equation*}
    and so
    \begin{equation*}
     \frac{1-p}{1-2p}>\frac{1-p}{1-r-p}>\left(1-\frac{q}{1-p}\right)^{-1}.
    \end{equation*}
    Thus, if we choose $d:=\min\left\{\frac{1-p}{1-r-p},\frac{3}{2}\left(1-\frac{q}{1-p}\right)^{-1}\right\}$, then 
    \begin{equation*}
     \left(1-\frac{q}{1-p}\right)^{-1}<d<\min\left\{\frac{1-p}{1-2p},2\left(1-\frac{q}{1-p}\right)^{-1}\right\}\quad \text{(for $p< 1/2$)},
    \end{equation*}
    which is Bruck's condition. For $p\ge\frac{1}{2}$, we see as before that
    \begin{equation*}
     \left(1-\frac{q}{1-p}\right)^{-1}<d<2\left(1-\frac{q}{1-p}\right)^{-1}\quad\text{(for $p\ge1/2$)}.
    \end{equation*}
    An important consequence of our choice of $d$ is that $d>1$, which we will use throughout this section.
    
    Now, one can take $f(n):=\lceil n^{d/(1-p)}\rceil$. To calculate the other moduli, we need the following Lemma, which is a direct consequence of Taylor's Theorem using the Lagrange remainder term.
    \begin{lemma}\label{lem:taylor}
     Suppose $x,r\in\RR$ with $x>0$ and $r\ne 1$. Then,
     \begin{enumerate}[label=(\roman*)]
      \item there exists a real number $\xi\in(x,x+1)$ such that $(x+1)^r=x^r+r\xi^{r-1}$, and
      \item there exists a real number $\nu\in(x-1,x)$ such that $(x-1)^r=x^r-r\nu^{r-1}$.
     \end{enumerate}

    \end{lemma}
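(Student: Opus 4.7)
The plan is to apply the Mean Value Theorem --- which is precisely Taylor's theorem of order one with Lagrange remainder --- to the function $f:t\mapsto t^{r}$ on a unit interval. Since $f$ is differentiable on $(0,\infty)$ with derivative $f'(t)=rt^{r-1}$, both parts reduce to one-line applications.

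For part (i), I would apply MVT to $f$ on the interval $[x,x+1]$: there exists $\xi\in(x,x+1)$ such that
\[
 f(x+1)-f(x)=f'(\xi)\cdot\bigl((x+1)-x\bigr)=r\xi^{r-1},
\]
which rearranges to $(x+1)^{r}=x^{r}+r\xi^{r-1}$. For part (ii), the same argument applied on $[x-1,x]$ produces some $\nu\in(x-1,x)$ with $x^{r}-(x-1)^{r}=r\nu^{r-1}$, and rearranging yields $(x-1)^{r}=x^{r}-r\nu^{r-1}$.

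The only subtlety worth flagging is that part (ii) tacitly requires $x>1$ so that the interval $[x-1,x]$ lies in $(0,\infty)$, where $f$ is smooth and $\nu^{r-1}$ is well-defined; the stated hypothesis $x>0$ should presumably be read as $x>1$ for (ii), and in the intended applications in Section~\ref{sec:ex1} the variable $x$ will always be of the form $n$ or $f(n)=\lceil n^{d/(1-p)}\rceil$ for $n\ge 2$, safely above $1$. The assumption $r\ne 1$ plays no role in the MVT step itself; it is presumably kept because the lemma will be instantiated with exponents such as $1-p$, $1-q$, $1-2p$, $r-1$, none of which equal $1$ under the standing constraints $0<q<\min\{p,1-p\}$. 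There is no genuine obstacle here --- this is a routine calculus fact packaged in a form convenient for estimating the tail sums $\sum\lambda_{j}$, $\sum\lambda_{j}^{2}$, and the telescoping differences $\theta_{f(n)}-\theta_{f(n+1)}$ in the forthcoming computation of the moduli $\varphi_{1},\varphi_{2},\varphi_{3},n_{0},\delta$.
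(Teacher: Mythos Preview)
Your proof is correct and is exactly the approach indicated in the paper, which simply states that the lemma ``is a direct consequence of Taylor's Theorem using the Lagrange remainder term'' without further detail; your application of the Mean Value Theorem to $t\mapsto t^r$ on $[x,x+1]$ and $[x-1,x]$ is precisely that. Your observation about the implicit requirement $x>1$ for part (ii) is a valid caveat that the paper does not address.
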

    We now proceed to calculate the moduli. Observe that
    \begin{align}\label{eq:nat}
     \theta_{f(n)}\cdot\sum_{j=f(n)}^{f(n+1)}\lambda_j&=\left\lceil n^{d/(1-p)}\right\rceil^{-q}\cdot\sum_{j=f(n)}^{f(n+1)}j^{-p}\notag\\
	      &>(n^{d/(1-p)}+1)^{-q}\cdot\int_{f(n)}^{f(n+1)}j^{-p}dj\notag\\
	      &=\frac{(n^{d/(1-p)}+1)^{-q}}{1-p}\left[(f(n+1))^{1-p}-(f(n))^{1-p}\right]\notag\\
	      &=\frac{(n^{d/(1-p)}+1)^{-q}}{1-p}\left(\left\lceil(n+1)^{d/(1-p)}\right\rceil^{1-p}-\left\lceil n^{d/(1-p)}\right\rceil^{1-p}\right)\notag\\
	      &\ge\frac{(n^{d/(1-p)}+1)^{-q}}{1-p}\left[\left((n+1)^{d/(1-p)}\right)^{1-p}-\left(n^{d/(1-p)}+1\right)^{1-p}\right]\notag\\
	      &=\frac{(n^{d/(1-p)}+1)^{-q}}{1-p}\left[(n+1)^d-\left(n^{d/(1-p)}+1\right)^{1-p}\right]
    \end{align}
    By virtue of \ref{lem:taylor}, there exists a $\xi\in(n^{d/(1-p)},n^{d/(1-p)}+1)$ such that
    \begin{align*}
     \left( n^{d/(1-p)}+1\right)^{1-p}&=n^d+(1-p)\xi^{-p}\\
				    &\le n^d+(1-p)n^{-\frac{dp}{1-p}}.
    \end{align*}
    Therefore, applying Lemma \ref{lem:taylor}, there exists a $\xi\in(n,n+1)$ such that
    \begin{align*}
    (n+1)^d-\left(n^{d/(1-p)}+1\right)^{1-p}&\ge(n+1)^d-n^d-(1-p)n^{-\frac{dp}{1-p}}\\
			&=n^d+d\xi^{d-1}-n^d-(1-p)n^{-\frac{dp}{1-p}}\\
			&\ge n^d+dn^{d-1}-n^d-(1-p)n^{-\frac{dp}{1-p}}\\
			&=dn^{d-1}-(1-p)n^{-\frac{dp}{1-p}}.
    \end{align*}
    Consequently, going back to \eqref{eq:nat},
    \begin{equation*}
     \theta_{f(n)}\cdot\sum_{j=f(n)}^{f(n+1)}\lambda_j>\frac{(n^{d/(1-p)}+1)^{-q}}{1-p}\left(dn^{d-1}-(1-p)n^{-\frac{dp}{1-p}}\right).
     \end{equation*}
     By Lemma \ref{lem:taylor}, there now exists $\xi\in(n^{d/(1-p)},n^{d/(1-p)}+1)$ such that 
     \begin{align*}
      \theta_{f(n)}\cdot\sum_{j=f(n)}^{f(n+1)}\lambda_j&>\frac{n^{\frac{-dq}{1-p}}-q\xi^{-q-1}}{1-p}\left(dn^{d-1}-(1-p)n^{-\frac{dp}{1-p}}\right)\\
      &\ge\frac{n^{\frac{-dq}{1-p}}-qn^{-\frac{d(q+1)}{1-p}}}{1-p}\left(dn^{d-1}-(1-p)n^{-\frac{dp}{1-p}}\right)\\
      &\ge\frac{d}{1-p}n^{d-1-\frac{dq}{1-p}}-n^{-\frac{d(p+q)}{1-p}}-\frac{dq}{1-p}n^{d-1-\frac{d(q+1)}{1-p}}.
     \end{align*}
    Now observe that $d-1-\frac{dq}{1-p}=d(1-\frac{q}{1-p})-1>\frac{d}{d}-1=0$ and $d-1-\frac{d(q+1)}{1-p}=d(1-\frac{q}{1-p})-\frac{d}{1-p}-1\le3/2-2=-1/2$. Moreover, $-\frac{d(p+q)}{1-p}<0$, so the right-hand-side in the equation above is monotone increasing. Therefore,
    \begin{equation*}
     \theta_{f(n)}\cdot\sum_{j=f(n)}^{f(n+1)}\lambda_j>\frac{d}{1-p}-1-\frac{dq}{1-p}=\frac{1-q}{1-p}\cdot d-1,\quad\textmd{for all }n\ge1.
    \end{equation*}
    Since $q<p$, we have $1-q>1-p$. Moreover, $d>1$. Thus, we may choose $n_0:=1$ and $\delta:=\frac{d(1-q)}{1-p}-1>0$.
    
    We now calculate the modulus $\varphi_2$. By Lemma \ref{lem:taylor}, there exists a real number\\$\xi\in\left((n+1)^{d/(1-p)},(n+1)^{d/(1-p)}+1\right)$ such that
    \begin{align*}
     \theta_{f(n)}-\theta_{f(n+1)}&=\left\lceil n^{d/(1-p)}\right\rceil^{-q}-\left\lceil(n+1)^{d/(1-p)}\right\rceil^{-q}\\
				  &\le n^{-\frac{dq}{1-p}}-\left((n+1)^{d/(1-p)}+1\right)^{-q}\\
				  &=n^{-\frac{dq}{1-p}}-\left((n+1)^{-\frac{dq}{1-p}}-q\cdot\xi^{-q-1}\right)\\
				  &\le n^{-\frac{dq}{1-p}}-(n+1)^{-\frac{dq}{1-p}}+q\cdot\left((n+1)^{d/(1-p)}\right)^{-q-1}\\
				  &=n^{-\frac{dq}{1-p}}-(n+1)^{-\frac{dq}{1-p}}+q\cdot(n+1)^{-\frac{d(1+q)}{1-p}}\\
				  &\le n^{-\frac{dq}{1-p}}-(n+1)^{-\frac{dq}{1-p}}+q\cdot n^{-\frac{d(1+q)}{1-p}}.
    \end{align*}
    Applying once more Lemma \ref{lem:taylor}, we see that for some $\xi\in(n,n+1)$,
    \begin{align*}
     \theta_{f(n)}-\theta_{f(n+1)}&=n^{-\frac{dq}{1-p}}-\left(n^{-\frac{dq}{1-p}}-\frac{dq}{1-p}\xi^{-\frac{dq}{1-p}-1}\right)+q\cdot n^{-\frac{d(1+q)}{1-p}}\\
			&=\frac{dq}{1-p}\xi^{-\frac{dq}{1-p}-1}+q\cdot n^{-\frac{d(1+q)}{1-p}}\\
			&\le\frac{dq}{1-p}n^{-\frac{dq}{1-p}-1}+q\cdot n^{-\frac{d(1+q)}{1-p}}.
    \end{align*}
    Since $-\frac{d(1+q)}{1-p}=-\frac{dq}{1-p}-\frac{d}{1-p}\le-\frac{dq}{1-p}-1$,
    \begin{equation}
     \theta_{f(n)}-\theta_{f(n+1)}\le\left(\frac{dq}{1-p}+q\right)n^{-\frac{dq}{1-p}-1}=\frac{q(d+1-p)}{1-p}n^{-\frac{dq}{1-p}-1}\label{eq:nat2}.
    \end{equation}

    On the other hand,
    \begin{align*}
     \sum_{j=f(n)}^{f(n+1)}\lambda_j&=\sum_{j=f(n)}^{f(n+1)}j^{-p}\le\int_{f(n)-1}^{f(n+1)-1}j^{-p}dj\\
      &=\frac1{1-p}\left(\left\lceil(n+1)^{d/(1-p)}-1\right\rceil^{1-p}-\left\lceil n^{d/(1-p)}-1\right\rceil^{1-p}\right).
    \end{align*}
    Since $1-p>0$, we conclude
    \begin{equation*}
     \sum_{j=f(n)}^{f(n+1)}\lambda_j\le\frac1{1-p}\left((n+1)^d-\left(n^{d/(1-p)}-1\right)^{1-p}\right).
    \end{equation*}
    By Lemma \ref{lem:taylor}, there exists $\xi\in(n,n+1)$ and $\nu\in(n^{d/(1-p)}-1,n^{d/(1-p)})$ such that for $n\ge2$
    \begin{align*}
     (1-p)\cdot\sum_{j=f(n)}^{f(n+1)}\lambda_j&\le n^d+d\xi^{d-1}-\left(n^{d/(1-p)}-1\right)^{1-p}\\
				&\le n^d+d(n+1)^{d-1}-\left(n^{d/(1-p)}-1\right)^{1-p}\\
				&=n^d+d(n+1)^{d-1}-n^d+(1-p)\nu^{-p}\\
				&\le n^d+d(n+1)^{d-1}-n^d+(1-p)\left(n^{d/(1-p)}-1\right)^{-p}\\
				&\le d(n+1)^{d-1}+(1-p)\cdot2^pn^{-\frac{dp}{1-p}}\\
				&\le d\cdot2^{d-1}n^{d-1}+(1-p)\cdot2^pn^{-\frac{dp}{1-p}}.
    \end{align*}
    Combining this with \eqref{eq:nat2}, we get
    \begin{align*}
     \left(\theta_{f(n)}-\theta_{f(n+1)}\right)\sum_{j=f(n)}^{f(n+1)}\lambda_j&\le\left(\frac{q(d+1-p)}{1-p}n^{-\frac{dq}{1-p}-1}\right)\frac{d\cdot2^{d-1}n^{d-1}+(1-p)\cdot2^pn^{-\frac{dp}{1-p}}}{1-p}\\
     &=\frac{2^{d-1}dq(d+1-p)}{(1-p)^2}n^{d-2-\frac{dq}{1-p}}+\frac{2^pq(d+1-p)}{1-p}n^{-\frac{d(p+q)}{1-p}-1}.
    \end{align*}
    Now observe that $d-2-\frac{dq}{1-p}=d(1-\frac{q}{1-p})-2\le\frac{3}{2}(1-\frac{q}{1-p})^{-1}(1-\frac{q}{1-p})-2=-\frac{1}{2}$. Therefore,
    \begin{align*}
     \left(\theta_{f(n)}-\theta_{f(n+1)}\right)\sum_{j=f(n)}^{f(n+1)}\lambda_j&\le\frac{2^{d-1}dq(d+1-p)}{(1-p)^2}n^{-\frac{1}{2}}+\frac{2^pq(d+1-p)}{1-p}n^{-\frac{d(p+q)}{1-p}-1}\\
     &\le\frac{2^{d-1}dq(d+1-p)}{(1-p)^2}n^{-\frac{1}{2}}+\frac{2^pq(d+1-p)}{(1-p)^2}n^{-\frac{1}{2}}\\
     &\le\frac{2^ddq(d+1-p)}{(1-p)^2}n^{-\frac{1}{2}}.
    \end{align*}
    Therefore, we may take $\varphi_2(\varepsilon):=\left\lceil\left(\frac{2^{d}dq(d+1-p)}{\varepsilon(1-p)^2}\right)^2\right\rceil$.
    
    Now, we calculate the modulus $\varphi_3$. To this end, observe that
    \begin{multline*}
     \sum_{j=f(n)}^{f(n+1)}\lambda_j^2=\sum_{j=f(n)}^{f(n+1)}j^{-2p}\le\int_{j=f(n)-1}^{f(n+1)-1}j^{-2p}dj\\=\left\{
     \begin{array}{ll}
         \frac1{1-2p}\left(\left\lceil(n+1)^{d/(1-p)}-1\right\rceil^{1-2p}-\left\lceil n^{d/(1-p)}-1\right\rceil^{1-2p}\right)&\textmd{, if }p\ne{\frac12},\\
         \log\left\lceil(n+1)^{d/(1-p)}-1\right\rceil-\log\left\lceil n^{d/(1-p)}-1\right\rceil&\textmd{, if }p=\frac12.
     \end{array}\right.
    \end{multline*}
    We have to distinguish the cases $p>1/2$, $p=1/2$ and $p<1/2$. For $p>1/2$, we use the estimate
    \begin{equation*}
     \sum_{j=f(n)}^{f(n+1)}\lambda_j^2\le\frac{\left(n^{d/(1-p)}-1\right)^{1-2p}}{2p-1}\le\varepsilon,\quad\text{for all }n\ge\left(\bigl((2p-1)\varepsilon\bigr)^{1/(1-2p)}+1\right)^{(1-p)/d}.
    \end{equation*}
    For $p=1/2$, we see that
    \begin{equation*}
     \sum_{j=f(n)}^{f(n+1)}\lambda_j^2\le\log\frac{(n+1)^{d/(1-p)}}{\left\lceil n^{d/(1-p)}-1\right\rceil}\le\log\frac{(n+1)^{d/(1-p)}}{n^{d/(1-p)}-1}.
    \end{equation*}
    Now observe that $(n+1)^{d/(1-p)}=n^{d/(1-p)}+\frac{d}{1-p}\xi^{\frac{d}{1-p}-1}$ for some $\xi\in(n,n+1)$. Therefore,
    \begin{align*}
     \sum_{j=f(n)}^{f(n+1)}\lambda_j^2&\le\log\left(\frac{n^{d/(1-p)}+\frac{d}{1-p}(n+1)^{\frac{d}{1-p}-1}}{n^{d/(1-p)}-1}\right)\\
     &=\log\left(1+\frac{\frac{d}{1-p}(n+1)^{\frac{d}{1-p}-1}+1}{n^{d/(1-p)}-1}\right)\\
     &\le\log\left(1+2^{d/(1-p)}\cdot2^{d/(1-p)}\cdot\frac{\frac{d}{1-p}n^{\frac{d}{1-p}-1}}{n^{d/(1-p)}}\right)\\
     &=\log\left(1+\frac{d\cdot2^{\frac{2d}{1-p}}}{1-p}\cdot\frac1n\right)\le\varepsilon,\quad\text{for all }n\ge\frac{d\cdot2^{\frac{2d}{1-p}}}{(1-p)(\exp(\varepsilon)-1)}.
    \end{align*}
    For $p<1/2$, we see that there exists $\nu\in(n^{d/(1-p)}-1,n^{d/(1-p)})$ and $\xi\in(n,n+1)$ such that
    \begin{align*}
     (1-2p)\cdot\sum_{j=f(n)}^{f(n+1)}\lambda_j^2&=\left\lceil(n+1)^{d/(1-p)}-1\right\rceil^{1-2p}-\left\lceil n^{d/(1-p)}-1\right\rceil^{1-2p}\\
				&\le(n+1)^\frac{d(1-2p)}{1-p}-\left(n^{d/(1-p)}-1\right)^{1-2p}\\
				&=(n+1)^\frac{d(1-2p)}{1-p}-n^{\frac{d(1-2p)}{1-p}}+(1-2p)\nu^{-2p}\\
				&\le(n+1)^\frac{d(1-2p)}{1-p}-n^{\frac{d(1-2p)}{1-p}}+(1-2p)(n^{d/(1-p)}-1)^{-2p}\\
				&=n^\frac{d(1-2p)}{1-p}+\frac{d(1-2p)}{1-p}\xi^{\frac{d(1-2p)}{1-p}-1}-n^{\frac{d(1-2p)}{1-p}}+(1-2p)(n^{d/(1-p)}-1)^{-2p}\\
				&\le\frac{d(1-2p)}{1-p}n^{\frac{d(1-2p)}{1-p}-1}+(1-2p)(n^{d/(1-p)}-1)^{-2p}
    \end{align*}
    Observe that $\frac{d(1-2p)-1+p}{1-p}=\frac{d(1-2p)}{1-p}-1<0$ since $d<\frac{1-p}{1-2p}$. Therefore,
    \begin{align*}
     \sum_{j=f(n)}^{f(n+1)}\lambda_j^2&\le\frac{d}{1-p}n^{\frac{d(1-2p)}{1-p}-1}+(n^{d/(1-p)}-1)^{-2p}\\
     &\le\varepsilon,\quad\text{for all }n\ge\max\left\{\left(\frac{2d}{(1-p)\varepsilon}\right)^{\frac{1-p}{d(1-2p)-1+p}},\left((2/\varepsilon)^{\frac{1}{2p}}+1\right)^{(1-p)/d}\right\}.
    \end{align*}
    Observing that $\theta_n=n^{-q}$ converges to $0$ with modulus $\sqrt[q]{1/\varepsilon}$, we summarize the moduli for this choice of the parameter sequences.
    \begin{enumerate}
     \item $\displaystyle n_0:=1$ and $\delta:=\frac{d(1-q)}{1-p}-1$,
     \item $f(n):=\lceil n^{d/(1-p)}\rceil$, where $d:=\min\left\{\frac{1-p}{1-r-p},\frac{3}{2}\left(1-\frac{q}{1-p}\right)^{-1}\right\}$,
     \item $\displaystyle\varphi_1(\varepsilon):=\sqrt[q]{1/\varepsilon}$,
     \item $\displaystyle\varphi_2(\varepsilon):=\left\lceil\left(\frac{2^{d}dq(d+1-p)}{\varepsilon(1-p)^2}\right)^2\right\rceil+1$,
     \item $\displaystyle\varphi_3(\varepsilon):=\left\{\begin{array}{ll}
					    ((2p-1)\varepsilon)^{\frac{1-p}{d(1-2p)}}+1&\text{for }p>\frac12,\\
					    \frac{d\cdot2^{\frac{2d}{1-p}}}{(1-p)(\exp(\varepsilon)-1)}+1&\text{for }p=\frac12,\\
                                            \max\left\{\left(\frac{2d}{(1-p)\varepsilon}\right)^{\frac{1-p}{d(1-2p)-1+p}},\left((2/\varepsilon)^{\frac{1}{2p}}+1\right)^{(1-p)/d}\right\}+1&\text{for }p<\frac12.
                                           \end{array}\right.$
    \end{enumerate}

    \subsection{Example 2}
    We begin with the following well-known inequality, whose proof we include for completeness.
    \begin{lemma}\label{lem:log}
     For all $x\ge0$, $\log(1+x)\le\frac{x}{\sqrt{1+x}}$.
    \end{lemma}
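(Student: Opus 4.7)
The plan is to reduce the inequality to a standard calculus exercise on a single variable. I would substitute $y = \sqrt{1+x}$, so that $y \ge 1$ and $x = y^2 - 1$. The target inequality $\log(1+x) \le x/\sqrt{1+x}$ then becomes $2\log y \le y - 1/y$, or equivalently $g(y) := y - 1/y - 2\log y \ge 0$ for all $y \ge 1$. This substitution is attractive because it turns the square root into a polynomial expression and makes the derivative transparent.

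Next I would verify the base case $g(1) = 1 - 1 - 0 = 0$, and then compute the derivative $g'(y) = 1 + 1/y^2 - 2/y$. The key algebraic observation is that this simplifies to $g'(y) = (1 - 1/y)^2$, which is manifestly nonnegative for $y \ge 1$. Hence $g$ is nondecreasing on $[1,\infty)$, and combined with $g(1) = 0$ we get $g(y) \ge 0$ for all $y \ge 1$. Substituting back yields the claim.

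Alternatively, one could stay in the original variable: set $h(x) := x/\sqrt{1+x} - \log(1+x)$, note $h(0) = 0$, compute
\[
 h'(x) = \frac{2+x - 2\sqrt{1+x}}{2(1+x)^{3/2}},
\]
and reduce the nonnegativity of the numerator to $x^2 \ge 0$ via squaring. Either route is elementary; I would prefer the substitution route because it avoids the ${}^{3/2}$ power and the auxiliary squaring step, producing a single clean perfect square. The only thing to be a little careful about is verifying that the substitution $y \mapsto y^2 - 1$ gives a bijection $[1,\infty) \to [0,\infty)$, which is immediate. I do not anticipate any real obstacle, since the identity $g'(y) = (1-1/y)^2$ does all the work once it is spotted.
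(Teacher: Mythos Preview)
Your proof is correct, and your ``alternative'' route is exactly what the paper does: it sets $f(x)=x/\sqrt{1+x}-\log(1+x)$, computes $f'(x)$ in the original variable, verifies that the resulting numerator is nonnegative for $x\ge 0$, and concludes from $f(0)=0$. Your preferred substitution $y=\sqrt{1+x}$ is the same monotonicity argument after a change of variable; its payoff is that $g'(y)=(1-1/y)^2$ is a visible perfect square, so no separate estimate on the numerator is needed, whereas the direct computation leaves one with $2+x-2\sqrt{1+x}=(\sqrt{1+x}-1)^2$ (or the paper's variant) to recognise. Either way the argument is the standard one-line calculus check.
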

    \begin{proof}
     Define $f:[0,\infty]\to\RR$ by $f(x):=\frac{x}{\sqrt{1+x}}-\log(1+x)$. Then
     \begin{equation*}
      f'(x)=\frac{\sqrt{1+x}-\frac{x}{2\sqrt{1+x}}}{1+x}-\frac{1}{1+x}=\frac{\frac{2+2x-x}{\sqrt{1+x}}-1}{1+x}=\frac{2+x-\sqrt{1+x}}{(1+x)^{3/2}}\ge0.
     \end{equation*}
     Moreover, $f(0)=0$, so $f(x)\ge0$ for all $x\ge0$, whence the claim follows.
    \end{proof}

    Set $\lambda_n=1/n$ and $\theta_n=1/\log\log n$ for $n\ge 3$ and $\lambda_1=\lambda_2=\theta_1=\theta_2=0$ (see \cite{Bruck(74)}). Then, we may take $n_0:=3$, $\delta:=1/2$, $f(n):=n^n$, $\varphi_1(\varepsilon):=\exp\exp(1/\varepsilon)$, $\varphi_2:=\max\{e^4,\exp((1/\varepsilon)^2-1)-1\}$ and $\varphi_3:=\max\{3,\log(2/\varepsilon+1)\}$. That $\varphi_1$ is as required is immediate. Moreover, by Example 1 of \cite{Bruck(74)},
    \begin{equation*}
     \theta_{f(n)}\sum_{j=f(n)}^{f(n+1)}\lambda_j\ge\frac{\log n}{\log n+\log\log n}\ge\frac{\log n}{2\log n}=\frac12,\quad\text{for all }n\ge3,
    \end{equation*}
    so $n_0$ and $\delta$ are as required.
    
    Again from \cite{Bruck(74)},
    \begin{equation*}
     1+\log(n+1)\ge\sum_{j=f(n)}^{f(n+1)}\lambda_j\ge\log n,\quad\text{for all }n\ge3.
    \end{equation*}
    Moreover, for $n\ge3$,
    \begin{align*}
     \log\log (n+1)^{n+1}-\log\log n^n&\le\log\log\frac{(n+1)^{n+1}}{n^n}=\log\log\left(\left(\frac{n+1}{n}\right)^n(n+1)\right)\\
     &\le\log\log(e(n+1)).
    \end{align*}
    Consequently
    \begin{align*}
     \theta_{f(n)}-\theta_{f(n+1)}&=\frac{1}{\log\log n^n}-\frac{1}{\log\log (n+1)^{n+1}}\\
	      &=\frac{\log\log (n+1)^{n+1}-\log\log n^n}{\left(\log\log n^n\right)\cdot\left(\log\log(n+1)^{n+1}\right)}\\
	      &\le\frac{\log\log(e(n+1))}{\left(\log\log n^n\right)\cdot\left(\log\log(n+1)^{n+1}\right)}.
    \end{align*}
    Now $e^4\log e^4=4e^4=3e^4+e^4>e^5+e=e(e^4+1)$, so $\log(n\log n)\ge\log(e(n+1))$ for all $n\ge e^4$. Consequently, 
    \begin{align*}
     \left(\theta_{f(n)}-\theta_{f(n+1)}\right)\cdot\sum_{j=f(n)}^{f(n+1)}\lambda_j&\le\frac{1+\log(n+1)}{\log(n\log n)}\cdot\frac{\log\log(e(n+1))}{\log((n+1)\log(n+1))}\\
     &\le\frac{\log(e(n+1))}{\log(n\log n)}\cdot\frac{\log(1+\log(n+1))}{\log(n+1)}\\
     &\le 1\cdot\frac{\log(1+\log(n+1))}{\log(n+1)}.
    \end{align*}
    Now, we apply Lemma \ref{lem:log}
    \begin{align*}
    \left(\theta_{f(n)}-\theta_{f(n+1)}\right)\cdot\sum_{j=f(n)}^{f(n+1)}\lambda_j &\le\frac{\log(n+1)}{\log(n+1)\sqrt{1+\log(n+1)}}\\
     &=\frac{1}{\sqrt{1+\log(n+1)}}\\
     &\le\varepsilon,\quad\text{for all }n\ge\max\{e^4,\exp((1/\varepsilon)^2-1)-1\}.
    \end{align*}
    
    Moreover,
    \begin{align*}
     \sum_{j=f(n)}^{f(n+1)}\lambda_j^2\le\int_{j=f(n)-1}^{f(n+1)-1}j^{-2}dj&=-2\left(\left((n+1)^{(n+1)}-1\right)^{-1}-\left(n^n-1\right)^{-1}\right)\\
     &=\frac{2}{n^n-1}-\frac{2}{(n+1)^{(n+1)}-1}\\
     &\le\frac{2}{n^n-1}\le\varepsilon,\quad\text{for all }n\ge\max\{3,\log(2/\varepsilon+1)\}.
    \end{align*}

\end{document}